\newtheorem{thm}{Theorem}[section]
\newtheorem{cor}[thm]{Corollary}
\newtheorem{lem}[thm]{Lemma}
\newtheorem{prop}[thm]{Proposition}
\newtheorem{defn}[thm]{Definition}
\numberwithin{equation}{section}
\def\pn{\par\noindent}
\begin{document}
\begin{tabular}{c r}
\vspace{-0.6cm}

\end{tabular}
\hskip 1.5 cm
\begin{tabular}{l}

\end{tabular}
\hskip 2 cm
\begin{tabular}{c c}
\vspace{-0.2cm}
\vspace{-1.2cm}
\end{tabular}
\vspace{1.3 cm}


\title{On central automorphisms of groups and nilpotent rings}
\author{Yassine Guerboussa and Bounabi Daoud}

\maketitle

\begin{abstract}
 Let $G$ be a group.  The  central automorphism group  $Aut_c(G)$ of $G$ is  the centralizer of $Inn(G)$ the subgroup of $Aut(G)$ of inner automorphisms.  There is a one to one map $ \sigma \mapsto  h_\sigma$  from the set $Aut_c(G)$  onto the set  $Hom(G,Z(G))$ of  homomorphisms from $G$ onto its center, with $ h_\sigma(x)=x^{-1} \sigma(x)$.  This map can be used to obtain informations about the size of $Aut_c(G)$, and also about its structure in some special cases.  In this paper we see how to use it to obtain informations about the structure of $Aut_c(G)$ in the general case.  The notion of the adjoint group of a ring is the main tool in our approach.      
\end{abstract}

\vskip 0.2 true cm


\pagestyle{myheadings}
\markboth{\rightline {\scriptsize  Y. Guerboussa and B. Daoud}}
         {\leftline{\scriptsize On central automorphisms of groups and nilpotent rings }}

\bigskip
\bigskip


\section{\bf Introduction}
\vskip 0.4 true cm
     
  It is very difficult to prove general theorems about the automorphisms of finite p-groups, and very little is known about them.  An automorphism of a group $G$ is termed central if it commutes with every inner automorphism, clearly the central automorphisms of $G$ form a normal subgroup $Aut_c(G)$ of $Aut(G)$.  If $G$ is a finite $p$-group, then $Aut_c(G)$ has a great importance in investigating $Aut(G)$, and it has been studied by several authors, see for instance ([2]-[5], and also [9], [10]).\\
It is easy to see that the map, or the Adney-Yen map for convenience, $ \sigma \mapsto  h_\sigma$ determines a one to one map from the set $Aut_c(G)$ onto the set $Hom(G,Z(G))$, where $ h_\sigma(x)=x^{-1} \sigma(x)$.  What are the informations that can be deduced about $Aut_c(G)$ from this relation? this is the main task of this paper. \\
  Let $R$ be a (associative) ring.  Under the circle composition $x\circ y=x+y+xy$, the set of all elements of $R$ forms a monoid with identity element $0 \in R$, this monoid is called the adjoint monoid or semigroup of the ring $R$.  The adjoint group $ R^\circ$ of $R$ is the group of invertible elements in this monoid. 
\\ Let consider the set $Hom(G,Z(G))$ as a ring, the addition is defined in the usual way and we take the composition of maps as a multiplication.  Our main observation is that the Adney-Yen map defines an isomorphism between $Aut_c(G)$ and the adjoint group of the ring $Hom(G,Z(G))$.    
 \\ When the ring $R$ has an identity $1$, the mapping $x \mapsto 1+x$ determines a group isomorphism from $R^\circ$ to the multiplicative group of the ring $R$.  This agrees with the usual case when $G$ is abelian : the central automorphism group coincides with $Aut(G)$ which is the multiplicative group of the ring $End(G)$.     
\\Assume that $G$ is finite.  It was proved in [2] that the Adney-Yen map is a bijection if $G$ does not have a non-trivial abelian direct factor.  In the light of our observation, this is equivalent to saying that $Hom(G,Z(G))$ is a radical ring.  Following Jacobson, a ring $R$ is termed radical if its adjoint semigroup is a group, or equivalently $ R^\circ = R$.  Adjoint groups of  radical rings are interesting objects to study and we may find a considerable number of papers in the subject (see [6] for some references).
\\The above results and some of its consequences are discussed in Section 2 in a more general context.  And since we are mainly interested to finite p-groups, the remaining sections are devoted to their central automorphisms, in Section 3 we introduce the notion of a $p$-nil ring in order to studying the structure of $Aut_c(G)$ when $G$ is a finite $p$-group with $Z(G) \leq \Phi (G)$.  The results of this section are applied in Section 4 to the longstanding problem of whether every non-abelian finite $p$-group has a non-inner automorphism of order $p$ (see [1]), we give a necessary and a sufficient condition for a finite $p$-groups to have a non-inner central automorphism of order $p>2$.      

Throughout, the unexplained notation is standard in the literature.  We denote by $Hom(G,N)$ the group of homomorphisms from $G$ to an abelian group $N$.  We denote by $d(G)$ the minimal number of generators of $G$, and the rank $r(G)$ of $G$ is defined to be $sup\{d(H), H\leq G\}$.  The exponent of $G$ is denoted by $exp(G)$ and $\mathbb{Z}_n$ denotes the ring of integers modulo $n$.  

\begin{lem}
If $M$ and $N$ are finite abelian $p$-groups, then the rank and the exponent of the abelian group $Hom(M,N)$ are equal respectively to $r(M). r(N)$ and $min\{exp(M),exp(N)\}$.
\end{lem}
\begin{proof}
This follows immediately from the properties $$Hom(\prod_{i}M_i,\prod_{j}N_j) \cong \prod_{i,j}Hom(M_i,N_j)$$
 where $M_i$ and $N_j$ are abelian groups, and $$Hom(\mathbb{Z}_{p^n},\mathbb{Z}_{p^m}) \cong \mathbb{Z}_{p^{min\{n,m\}}}.$$
\end{proof}

Given an associtive ring $R$, we denote by $R^+$  the additive group of $R$.  The $n{th}$ power $R^n$ of $R$ is the additive group generated by all the products of $n$ elements of $R$.  We say that $R$ is nilpotent if  $R^{n+1}=0$ for some non-negative integer $n$, the least integer $n$ satisfying $ R^{n+1}=0$ is called the class of nilpotency of the ring $R$.  Note that every nilpotent ring $R$ is radical since for every $x \in R$ we have$$ x\circ \sum_{i}(-1)^ix^i=(\sum_{i}(-1)^ix^i) \circ x =0.$$
\\The Jacobson radical of the ring $R$ is the largest ideal of $R$ contained in the adjoint group $R^\circ$.  This implies that $R$ is radical if and only if it coincides with its Jacobson radical.  By a classical result the Jacobson radical of an artinian ring is nilpotent, so every artinian (in particular finite) radical ring is nilpotent.\\
The following lemma is standard in the literature (see [8], Section I.6).
\begin{lem}
The adjoint group of a nilpotent ring $R$ is nilpotent of class at most equals to the nilpotency class of $R$.
\end{lem}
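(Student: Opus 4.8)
The plan is to transport the problem into a ring with identity, where the circle group becomes a multiplicative group of the form $1+R$, and then to control the lower central series by the powers $R^k$. First I would observe that since $R$ is nilpotent it is radical (as noted in the excerpt), so $R^\circ = R$ as a set. I then adjoin a unit by forming $R_1 = \mathbb{Z}\cdot 1 \oplus R$; this is an associative ring with identity in which $R$ is a two-sided ideal with $R^{n+1}=0$, where $n$ is the nilpotency class of $R$. The assignment $x \mapsto 1+x$ turns the circle product into the ordinary product, since $(1+x)(1+y)=1+(x\circ y)$, so it is a monoid isomorphism from $(R,\circ)$ onto $1+R \subseteq R_1$. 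Because $R$ is radical, every element of $1+R$ is invertible; hence $R^\circ \cong 1+R$ as groups, and it suffices to bound the nilpotency class of the multiplicative group $1+R$.

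The heart of the argument is the claim that $\gamma_k(1+R) \subseteq 1+R^k$ for every $k \geq 1$, where $\gamma_k$ denotes the $k$th term of the lower central series. I would prove this by induction on $k$, the case $k=1$ being immediate. For the inductive step I first record that $1+R^{k+1}$ is a subgroup of $1+R$ (since $R^{k+1}$ is again a nilpotent ideal) and that $R^{k+1}$ is a two-sided ideal of $R_1$. It then suffices to check that every generating commutator of $\gamma_{k+1}=[\gamma_k,1+R]$ lies in $1+R^{k+1}$. Writing such a generator as $[a,b]$ with $a=1+x \in \gamma_k$, so that $x \in R^k$ by the inductive hypothesis, and $b=1+y$ with $y \in R$, I would use the identity $[a,b]-1 = a^{-1}b^{-1}(ab-ba)$, in which $ab-ba = xy-yx \in R^{k+1}$. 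Since $a^{-1},b^{-1} \in R_1$ and $R^{k+1}$ is an ideal, the right-hand side lies in $R^{k+1}$, giving $[a,b]\in 1+R^{k+1}$, as required.

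Finally, taking $k=n$ yields $\gamma_{n+1}(1+R) \subseteq 1+R^{n+1} = \{1\}$, so that $1+R$, and therefore $R^\circ$, is nilpotent of class at most $n$. I expect the main obstacle to be the inductive step: one must see that the group commutator is governed by the ring commutator, which is exactly what the identity $[a,b]-1 = a^{-1}b^{-1}(ab-ba)$ supplies, and one must verify that multiplying by the units $a^{-1},b^{-1}$ does not leave $R^{k+1}$, which is guaranteed by $R^{k+1}$ being a two-sided ideal of $R_1$. Everything else is bookkeeping with the descending chain of powers $R^k$.
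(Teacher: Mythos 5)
Your proof is correct and is essentially the paper's argument written out in full: the paper's one-line proof asserts that the chain $R \supset R^2 \supset \cdots \supset R^{n+1}=0$ induces a central series in $R^\circ$, and your inclusion $\gamma_k(1+R) \subseteq 1+R^k$, established via the commutator identity $[a,b]-1 = a^{-1}b^{-1}(ab-ba)$, is precisely the verification of that assertion. Adjoining a unit and passing to $1+R$ is a standard bookkeeping device rather than a genuinely different route, so the two proofs coincide in substance.
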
  
\begin{proof}
The series of ideals $$ R \supset R^2  \supset ...\supset R^{n+1}=0$$
induces a central series in the adjoint group of the ring $R$.
 \end{proof}
 The following lemma is a variant of theorem B in [6], it gives a bound for the rank of the adjoint group of a finite (periodic in general) radical ring $R$ in term of the rank of its additive group.
\begin{lem}
Let $R$ be a finite radical ring.  Then $ r(R^\circ) \leq 3 r(R^+)$, and if the order of $R$ is odd then $ r(R^\circ) \leq 2 r(R^+)$.
\end{lem}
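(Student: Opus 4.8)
The plan is to reduce first to the case of a $p$-ring and then to exploit the theory of powerful $p$-groups; the constants $2$ and $3$ are precisely the two thresholds separating odd primes from $p=2$ in that theory. Since $R$ is finite and nilpotent (a finite radical ring is nilpotent, as noted above), its additive group splits as $R^+=\bigoplus_p R_p$ into primary components, each $R_p$ is an ideal, and the cross products $R_pR_q$ with $p\neq q$ vanish; hence $R=\bigoplus_p R_p$ as rings and $R^\circ=\prod_p R_p^\circ$. As the rank of a finite nilpotent group is the maximum of the ranks of its primary factors, it suffices to treat a ring $R$ whose additive group is a $p$-group and to show $r(R^\circ)\le 2d$ for $p$ odd and $r(R^\circ)\le 3d$ for $p=2$, where $d:=r(R^+)=\dim_{\mathbb{F}_p}R/pR$; taking maxima over $p$ then yields both assertions of the lemma.

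The heart of the argument is a criterion for an adjoint group to be powerful. Writing $\bar x=-x+x^2-\cdots$ for the circle inverse, one computes the circle commutator $[x,y]_\circ\equiv xy-yx \pmod{R^3}$ and the circle power $x^{\circ p}=\sum_{k\ge 1}\binom{p}{k}x^k\equiv px \pmod{R^2}$. From these I would show: if $S$ is a finite nilpotent $p$-ring with $S^2\subseteq pS$ when $p$ is odd, respectively $S^2\subseteq 4S$ when $p=2$, then the commutator subgroup of $S^\circ$ lies in $pS$ (resp. $4S$) while $\langle x^{\circ p}\rangle=pS$ (resp. $\langle x^{\circ 4}\rangle=4S$), so that $S^\circ$ is powerful. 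Consequently, by the theorem of Lubotzky and Mann that a powerful $p$-group satisfies $r(G)=d(G)$, one gets $r(S^\circ)=d(S^\circ)=\dim_{\mathbb{F}_p}S/(pS+S^2)=\dim_{\mathbb{F}_p}S/pS=r(S^+)$.

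With this criterion the assembly is short. For $p$ odd, $pR$ is an ideal with $(pR)^2=p^2R^2\subseteq p(pR)$, so $N=(pR)^\circ$ is a powerful normal subgroup of $R^\circ$ with $r(N)=r((pR)^+)\le d$, while $R^\circ/N\cong(R/pR)^\circ$ has order $p^d$ and hence rank at most $d$. The elementary inequality $r(G)\le r(K)+r(G/K)$ for a normal subgroup $K$ (which follows from $d(H)\le d(H\cap K)+d(HK/K)$) then gives $r(R^\circ)\le 2d$. For $p=2$ the powerfulness threshold is $4S$, so I descend one step further along $R\supseteq 2R\supseteq 4R$: here $(4R)^2\subseteq 4(4R)$, so $(4R)^\circ$ is powerful of rank $\le d$, the quotients $(R/2R)^\circ$ and $(2R/4R)^\circ$ each have order a power of $2$ of exponent at most $d$ and hence rank $\le d$, and three applications of the same inequality yield $r(R^\circ)\le 3d$.

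The main obstacle is the powerfulness criterion of the second paragraph: the congruences above control $[x,y]_\circ$ and $x^{\circ p}$ only modulo one layer of the filtration, so the inclusions $[S^\circ,S^\circ]_\circ\subseteq\langle x^{\circ p}\rangle$ and $\langle x^{\circ p}\rangle\supseteq pS$ must be promoted to exact equalities by a successive-approximation argument up the chain $S\supseteq S^2\supseteq\cdots$, absorbing the higher terms $\binom{p}{k}x^k$ by means of $S^2\subseteq pS$. For $p$ odd those terms already carry a factor $p$, whereas for $p=2$ they do not, which is exactly what forces the passage all the way down to $4S$ and produces the extra factor. Everything else — the primary reduction, the rank inequality for extensions, and the facts that finite abelian $p$-groups have $r=d$ with rank non-increasing under subgroups and quotients — is routine.
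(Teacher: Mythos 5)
Your proposal is correct, and it follows essentially the route the paper itself relies on: the paper states this lemma without proof, as a variant of Theorem B of Dickenschied [6], whose argument is exactly your reduction to $p$-rings, the powerfulness of $(pR)^\circ$ (resp. $(4R)^\circ$ for $p=2$) established by successive approximation along the filtration, the Lubotzky--Mann equality $r(G)=d(G)$ for powerful $p$-groups, and the subadditivity $r(G)\le r(K)+r(G/K)$. The only items to tidy are notational (write $(S^\circ)^p$, the subgroup generated by all circle $p$-th powers, rather than $\langle x^{\circ p}\rangle$), and you may note that the paper later sketches, for odd $p$ only, a simpler variant of this same proof in which powerfulness of $(pR)^\circ$ is replaced by its $p$-centrality together with Thompson's bound $r(G)\le d(Z(G))$ for $p$-central groups.
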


\section{\bf {\bf \em{\bf  Central automorphisms and radical rings}}}
\vskip 0.4 true cm

We begin with the following general remark.  Every abelian normal subgroup $A$ of a group $G$ can be viewed as a $G$-module via conjugation $a^x=x^{-1}ax$, with $x \in G \mbox{ and }a \in A$.  A derivation of $G$ into $A$ is a mapping $\delta:G\rightarrow A$ such that $\delta(xy)=\delta(x)^x \delta(y)$.  The set $Der(G,A)$ of these derivations is a ring under the addition $ \delta_1+\delta_2(x)= \delta_1(x)\delta_2(x)$ and the multiplication  $ \delta_1 \delta_2(x)= \delta_2(\delta_1(x))$, with  $ \delta_1,\delta_2 \in Der(G,A) \mbox { and }x \in G$.  Let denote by  $End_A(G)$ the set of  endomorphisms $u$ of $G$ having the property $x^{-1}u(x) \in A$, for all $x \in G$.  We check easily that  $End_A(G)$ is a submonoid of $End(G)$ and every endomorphism $u \in  End_A(G)$ defines a derivation $\delta_u(x)=x^{-1} u(x)$ of $G$ into $A$.  Note also that to each derivation $\delta \in Der(G,A)  $ we can associate an endomorphism $u \in End_A(G) $ with $u(x)=x\delta(x)$.
\begin{lem}
Under the above notation, the mapping $u\mapsto \delta_u  $ is an isomorphism between the monoid $End_A(G)$ and the adjoint monoid of the ring $Der(G,A)$.  In particular it induces an isomorphism between the corresponding groups of invertible elements.
\end{lem}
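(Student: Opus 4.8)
The plan is to exhibit $u\mapsto\delta_u$ as a bijective monoid homomorphism; bijectivity comes essentially for free from the constructions already in place, so the heart of the matter is that the map respects the two monoid operations.

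First I would record that the assignment $\delta\mapsto u_\delta$, with $u_\delta(x)=x\delta(x)$, is a two-sided inverse. The relations $\delta_{u_\delta}(x)=x^{-1}x\delta(x)=\delta(x)$ and $u_{\delta_u}(x)=x\cdot x^{-1}u(x)=u(x)$ show that the two constructions introduced above undo one another, so $u\mapsto\delta_u$ is a bijection from $End_A(G)$ onto $Der(G,A)$. Moreover the identity endomorphism maps to the trivial derivation $x\mapsto 1$, which is the zero of the ring $Der(G,A)$, that is, the identity of its adjoint monoid; so identities correspond.

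The crux is the multiplicativity $\delta_{uv}=\delta_u\circ\delta_v$, where on the left $uv$ denotes composition of endomorphisms taken in the same order used to define the product in $Der(G,A)$ (apply $u$ first), and on the right $\circ$ is the circle operation $\delta_u+\delta_v+\delta_u\delta_v$ of the adjoint monoid. To verify it I would compute $\delta_{uv}(x)=x^{-1}v(u(x))$ directly: writing $u(x)=x\delta_u(x)$ and using that $v$ is an endomorphism gives $v(u(x))=v(x)\,v(\delta_u(x))$, and then the defining relation $v(g)=g\,\delta_v(g)$ applied at $g=x$ and at $g=\delta_u(x)\in A$ yields
$$\delta_{uv}(x)=\delta_v(x)\,\delta_u(x)\,\delta_v(\delta_u(x)).$$
On the other hand, unwinding the ring operations,
$$(\delta_u\circ\delta_v)(x)=\delta_u(x)\,\delta_v(x)\,\delta_v(\delta_u(x)),$$
since addition in $Der(G,A)$ is pointwise multiplication in $A$ and $(\delta_u\delta_v)(x)=\delta_v(\delta_u(x))$.

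The one point that needs care — and the only place the hypothesis on $A$ enters — is the comparison of these two products: they agree precisely because $\delta_u(x)$ and $\delta_v(x)$ both lie in the abelian group $A$ and hence commute, so the first two factors may be transposed. I would flag the composition-order convention as the other easy-to-trip-over point, since with the opposite convention the map becomes an anti-isomorphism (onto the adjoint monoid of the opposite ring); the orders of endomorphism composition and of derivation multiplication must therefore be chosen to match. Granting the displayed identity, $u\mapsto\delta_u$ is a bijective monoid homomorphism, hence a monoid isomorphism; and since any monoid isomorphism carries units bijectively onto units, it restricts to an isomorphism between the unit group of $End_A(G)$ and the adjoint group $Der(G,A)^\circ$, which is the final assertion.
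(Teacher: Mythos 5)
Your proof is correct and is precisely the ``straightforward verification'' that the paper leaves to the reader: the two-sided inverse $\delta\mapsto u_\delta$, the matching of identities, and the computation showing $\delta_{uv}=\delta_u+\delta_v+\delta_u\delta_v$, where the commutation of $\delta_u(x)$ and $\delta_v(x)$ in the abelian group $A$ is exactly the point that makes the two sides agree. Your flagging of the composition-order convention (so that the paper's multiplication $(\delta_1\delta_2)(x)=\delta_2(\delta_1(x))$ matches apply-$u$-first composition in $End_A(G)$, avoiding an anti-isomorphism) is a worthwhile precision the paper glosses over, but the argument is the same one intended there.
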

\begin{proof}
Straightforward verification.
\end{proof}
Since the center $Z(G)$ is a trivial $G$-module, we have $Der(G,Z(G))=Hom(G,Z(G))$.  So for $A=Z(G)$ the mapping defined above reduces to the Adney-Yen map.  It follows that
\begin{prop}
The Adney-Yen map determines an isomorphism between the central automorphism group $Aut_c(G)$ and the adjoint group of the ring $Hom(G,Z(G))$.  
\end{prop}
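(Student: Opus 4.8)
The plan is to obtain the statement as the specialization of Lemma 2.1 to the case $A = Z(G)$, so that most of the work is already packaged there; what remains is to pin down precisely which two groups the resulting isomorphism of invertible elements connects. First I would record the two identifications that make the specialization meaningful. On the ring side, since $Z(G)$ is a trivial $G$-module the derivation condition $\delta(xy) = \delta(x)^x\delta(y)$ collapses to $\delta(xy) = \delta(x)\delta(y)$, so $Der(G,Z(G)) = Hom(G,Z(G))$, exactly as the paragraph preceding the proposition observes. Consequently the adjoint monoid of $Der(G,Z(G))$ is the adjoint monoid of $Hom(G,Z(G))$, whose group of invertible elements is by definition the adjoint group $Hom(G,Z(G))^\circ$.

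The core of the argument, and the step I expect to carry the real content, is to identify the group of invertible elements of the monoid $End_{Z(G)}(G)$ with $Aut_c(G)$. I would first note that an element of $End_{Z(G)}(G)$ is invertible in the monoid exactly when it is a bijective endomorphism whose inverse again lies in $End_{Z(G)}(G)$, that is, an automorphism $\sigma$ with $x^{-1}\sigma(x) \in Z(G)$ for all $x$. So the task reduces to the purely group-theoretic equivalence: an automorphism $\sigma$ of $G$ is central if and only if $x^{-1}\sigma(x) \in Z(G)$ for every $x \in G$.

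To prove that equivalence I would compute directly. Writing $\iota_g$ for conjugation by $g$, the commutativity $\sigma\iota_g = \iota_g\sigma$ unwinds, using surjectivity of $\sigma$, to the condition that $\sigma(g)g^{-1}$ centralizes $G$ for every $g$; since a central element commutes with $g$, this is in turn equivalent to $g^{-1}\sigma(g) \in Z(G)$. This shows $Aut_c(G) \subseteq End_{Z(G)}(G)$, and because $Aut_c(G)$ is a group the inverse of a central automorphism is again central, hence lies in $End_{Z(G)}(G)$; thus every central automorphism is invertible in the monoid. Conversely, a monoid-invertible element is an automorphism satisfying the Adney-Yen condition, hence central. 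This gives $Aut_c(G)$ as the group of units of $End_{Z(G)}(G)$, and restricting the monoid isomorphism of Lemma 2.1 to units yields the desired isomorphism $Aut_c(G) \cong Hom(G,Z(G))^\circ$. The only point demanding care is keeping the conjugation convention straight so that the conditions \emph{central} and $x^{-1}\sigma(x) \in Z(G)$ are matched correctly; the rest is the formal restriction of an already-established monoid isomorphism.
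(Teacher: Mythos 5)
Your proposal is correct and follows essentially the same route as the paper: the paper likewise obtains the proposition by specializing Lemma 2.1 to $A = Z(G)$, using that $Z(G)$ is a trivial $G$-module so that $Der(G,Z(G)) = Hom(G,Z(G))$. The only difference is that you explicitly verify the standard fact, left implicit in the paper, that the units of the monoid $End_{Z(G)}(G)$ are exactly the central automorphisms, which is a worthwhile detail to spell out but not a different approach.
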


Assume that $G$ is finite.  In [2] Adney and Yen have proved that every endomorphism in $ End_{Z(G)}(G)$  is an automorphism if and only if $G$ is purely non-abelian, that is $G$ does not have a non-trivial abelian direct factor.  The above observation  allows us to set this result under the form
\begin{thm}{ (Adney-Yen)}
Let $G$ be a finite group.  Then the ring  $Hom(G,Z(G))$ is radical if and only if $G$ is purely non-abelian. 
\end{thm}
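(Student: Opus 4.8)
The plan is to translate the ring-theoretic statement that $Hom(G,Z(G))$ is radical into the endomorphism statement appearing in the result of Adney and Yen quoted above, using the monoid isomorphism of the preceding lemma.

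First I would record that, since $Z(G)$ is a trivial $G$-module, we have $Der(G,Z(G)) = Hom(G,Z(G))$, so the preceding lemma applied with $A = Z(G)$ yields an isomorphism $u \mapsto \delta_u$ between the monoid $End_{Z(G)}(G)$ and the adjoint monoid of the ring $Hom(G,Z(G))$. By the definition recalled in Section 1, $Hom(G,Z(G))$ is radical precisely when its adjoint semigroup is a group, that is, when every element of its adjoint monoid is invertible. Transporting this across the isomorphism, the ring is radical if and only if every element of $End_{Z(G)}(G)$ is invertible in that monoid.

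Next I would identify invertibility in the monoid $End_{Z(G)}(G)$ with being an automorphism of $G$. If $u \in End_{Z(G)}(G)$ is invertible in the monoid, then it admits a two-sided inverse inside $End(G)$, so $u$ is an automorphism. For the converse, suppose $u \in End_{Z(G)}(G)$ is an automorphism of $G$; I claim $u^{-1}$ again lies in $End_{Z(G)}(G)$. Indeed, for $x \in G$ put $y = u^{-1}(x)$; then $x^{-1} u^{-1}(x) = u(y)^{-1} y = (y^{-1} u(y))^{-1}$, which lies in $Z(G)$ because $y^{-1} u(y) \in Z(G)$. Hence $u$ is invertible in the monoid, and the condition of the previous paragraph becomes: every endomorphism in $End_{Z(G)}(G)$ is an automorphism of $G$.

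This last condition is exactly the hypothesis of the Adney-Yen result stated just before the theorem, which asserts it holds if and only if $G$ is purely non-abelian. Chaining the equivalences gives the claim. The only point requiring an actual (and short) computation is the stability of $End_{Z(G)}(G)$ under taking inverses of automorphisms; everything else is a matter of combining the preceding lemma with the definition of a radical ring, so I expect no genuine obstacle here, the substance of the statement being carried by the already-cited theorem of Adney and Yen.
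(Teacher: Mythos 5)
Your proof is correct and takes essentially the same route as the paper, which states the theorem as an immediate translation of the classical Adney--Yen result on $End_{Z(G)}(G)$ through the monoid isomorphism $u \mapsto \delta_u$ of Lemma 2.1 (noting $Der(G,Z(G)) = Hom(G,Z(G))$ since $Z(G)$ is a trivial $G$-module). The one detail you verify explicitly --- that invertibility in the monoid $End_{Z(G)}(G)$ coincides with being an automorphism, via the computation $x^{-1}u^{-1}(x) = \left(y^{-1}u(y)\right)^{-1} \in Z(G)$ for $y = u^{-1}(x)$ --- is precisely the step the paper leaves implicit in saying the observation ``allows us to set this result under the form'' of the theorem.
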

The above theorem can be generalized to arbitrary finite rings as follows.
\begin{thm}
Let $R$  be a finite ring.  Then $R$  is radical if and only if $0$ is the only idempotent in $R$.
\end{thm}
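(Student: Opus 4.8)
The plan is to prove the two implications separately; the forward direction is immediate and all the content lies in the converse, which is where finiteness is used. Suppose first that $R$ is radical, so that $(R,\circ)$ is a group with identity $0$. Given an idempotent $e=e^2$, a direct computation gives
$$(-e)\circ(-e)=-e-e+(-e)(-e)=-2e+e^2=-e,$$
so $-e$ is an idempotent of the group $(R,\circ)$. Since the identity is the only idempotent of any group, $-e=0$, hence $e=0$. Thus $0$ is the unique idempotent of $R$, and this half uses nothing about finiteness.

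For the converse, assume $R$ is finite and that its only idempotent is $0$; I want to conclude that $R$ is radical. The key step is to exploit finiteness multiplicatively: for each $x\in R$ the powers $x,x^2,x^3,\dots$ lie in a finite set, so the cyclic subsemigroup they generate contains an idempotent, that is, there exists $k\ge 1$ with $x^{2k}=x^k$. By hypothesis this idempotent $x^k$ must equal $0$, so $x$ is nilpotent; hence $R$ is a nil ring. The second step is to observe that a nil ring is radical: if $x^{n}=0$, then the finite sum $y=\sum_{i=1}^{n-1}(-1)^i x^i$ satisfies $x\circ y=y\circ x=0$, by the same quasi-inverse computation used in Section 1 to show that every nilpotent ring is radical. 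Therefore every element of $R$ is quasi-regular, $R^\circ=R$, and $R$ is radical.

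The main obstacle is precisely the passage ``no nonzero idempotent $\Rightarrow$ nil'', and the point to get right is the existence of an idempotent power in the finite multiplicative semigroup of $R$, forcing that power to vanish; everything after that is formal. I would also flag that finiteness is genuinely necessary here: for example $2\mathbb{Z}$ has $0$ as its unique idempotent yet is not radical, since $2$ has no quasi-inverse. As a fallback for the converse I would keep the structural route: pass to the finite semisimple ring $R/J(R)$, note that if it were nonzero it would contain a nonzero idempotent (the identity of a Wedderburn factor), and lift this idempotent along the nilpotent ideal $J(R)$ to a nonzero idempotent of $R$, contradicting the hypothesis; this forces $R=J(R)$, which (as recalled in Section 1) means $R$ is radical. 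I expect the nil-ring argument to be the cleaner one, since it avoids both Wedderburn--Artin and the lifting of idempotents.
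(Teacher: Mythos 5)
Your proposal is correct and is essentially the paper's argument: the substantive direction rests on the same key fact --- every element of the finite multiplicative semigroup of $R$ has an idempotent power, which is precisely the paper's Lemma 2.5, and which you cite as standard rather than prove --- and your arrangement (no nonzero idempotent $\Rightarrow$ every element nilpotent $\Rightarrow$ radical via the quasi-inverse series) is just the contrapositive of the paper's (not radical $\Rightarrow$ a non-nilpotent element exists $\Rightarrow$ a nonzero idempotent among its powers). Your easy direction is a marginally slicker variant of the paper's: you observe that $(-e)\circ(-e)=-e$, so $-e$ is an idempotent of the group $(R,\circ)$ and hence equals $0$, whereas the paper shows directly that $-e$ has no quasi-inverse by multiplying $-e+y-ey=0$ on the left by $e$; both are one-line computations, so the two proofs coincide in substance.
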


Let be $R=Hom(G,Z(G))$.  We have $R$ is non-radical if and only if  there exists a non-zero idempotent homomorphism $e: G\rightarrow Z(G)$, and clearly this is equivalent to the existence of a non-trivial abelian direct factor of $G$.

The proof of Theorem 2.4 is based on the following result.
\begin{lem}
Let $x$ be an element of a semigroup $S$ such that $x^n = x^m$ for some positive integers $n\neq m$.  Then the set $\{ x^k \in S, k > 0 \}$ contains an idempotent.
\end{lem}
\begin{proof}
For every $n>0$, let $[n] = \{ k>0, x^k = x^n\}$. 
\\ Assume that $n < min [2n], \mbox{ for all } n>0$.  There exist by assumption $n<m$ such that $x^n = x^m$, so the class $[n]$ is unbounded since $n+k(m-n) \in [n]\mbox{, for all } k>0$.  On the other hand if $ l \in [n]$, then $ 2n\in [2l] $, and so $ l < 2n $, a contradiction.
\\ Hence, there exists $n$ such that $ n_0 = min [2n] \leq n$.  If $n_0 = n$, then $x^n$ is an idempotent element of $S$.  And if $n_0<n$, then $x^{2n-n_0}$ is an idempotent, since
$$(x^{2n-n_0})^2 = x^{4n-2n_0} = x^{2n} x^{2n-2n_0} = x^{n_0} x^{2n-2n_0} =  x^{2n-n_0}.$$
The result follows.
\end{proof}

\begin{proof}[Proof of Theorem 2.4]
Suppose that $R$ is not radical.  Since $R^\circ$ contains every nilpotent element, then $R$ contains a non-nilpotent element $x$. 
And since $R$ is finite, the set of all the powers of $x$ can not be infinite.  Hence there exist $n\neq m$ such that $x^n = x^m$.  The existence of a non-zero idempotent element follows now from Lemma 2.5.
\\Conversely, if $x \neq 0$ is an idempotent of $R$, then $-x\notin R^\circ $.  Otherwise there exists an element $y\in R$ such that $ -x+y-xy = 0$, if we multiply this equation by $x$ on the left we obtain $ -x =0$, which is not the case.  Hence $R^\circ \neq R$, and so $R$ is not radical.  The result follows.  
\end{proof}
 
As an immediate consequence of Theorem 2.3, we have 
\begin{cor}
If $G$ is a purely non-abelian finite group, then the ring $Hom(G,Z(G))$ is nilpotent. In particular, every homomorphism  $h:G \rightarrow Z(G)$ is nilpotent.
\end{cor}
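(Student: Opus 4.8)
The plan is to read the corollary as a straightforward assembly of Theorem 2.3 with the structural remark recorded after Lemma 1.2. Write $R = Hom(G,Z(G))$. Since $G$ is finite, both $G$ and $Z(G)$ are finite, so the set $R$ of homomorphisms $G \to Z(G)$ is finite, hence artinian. First I would apply Theorem 2.3: as $G$ is purely non-abelian, $R$ is a radical ring.

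Next I would convert ``radical'' into ``nilpotent'' using the two facts already quoted in the introduction, namely that $R$ is radical if and only if it coincides with its Jacobson radical, and that the Jacobson radical of an artinian ring is nilpotent. Since $R$ is finite it is artinian, and being radical it equals its Jacobson radical; therefore $R$ is nilpotent. This is exactly the first assertion.

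For the ``in particular'' clause, let $n$ be the nilpotency class of $R$, so that $R^{n+1}=0$. For any $h \in R = Hom(G,Z(G))$ the $(n+1)$-fold composite $h^{n+1}$ is a product of $n+1$ elements of $R$ and hence lies in $R^{n+1}=0$; thus $h^{n+1}=0$, so $h$ is nilpotent.

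I do not anticipate a genuine obstacle: the only step with any content is the passage from radical to nilpotent, which is handled entirely by the finiteness of $R$ together with the quoted theorem on artinian rings. Should one prefer to avoid that theorem for the element-wise statement, I note that the last clause can also be obtained directly: by Theorem 2.3 the ring $R$ is radical, so by Theorem 2.4 its only idempotent is $0$; for a fixed $h$, finiteness of $R$ forces $h^n = h^m$ for some $n \neq m$, and then Lemma 2.5 produces an idempotent among the positive powers of $h$, which must be $0$, so $h$ is nilpotent.
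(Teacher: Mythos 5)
Your proof is correct and matches the paper's intended argument exactly: the paper states this corollary as an immediate consequence of Theorem 2.3 together with the facts recorded in Section 1 (a radical ring equals its Jacobson radical, and the Jacobson radical of an artinian, in particular finite, ring is nilpotent), which is precisely your chain from radical to nilpotent, with the element-wise clause following trivially. Your alternative derivation of the element-wise statement via Theorem 2.4 and Lemma 2.5 is a valid bonus but unnecessary.
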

The following corollary is well-known in the litterature  (see [9]).
\begin{cor} 
The central automorphism group of a purely non-abelian finite group is nilpotent.
\end{cor}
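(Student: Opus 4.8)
The plan is to pass entirely to the ring-theoretic side through the Adney--Yen correspondence and then invoke the nilpotency results already assembled. By Proposition 2.2 the group $Aut_c(G)$ is isomorphic to the adjoint group $R^\circ$ of the ring $R = Hom(G,Z(G))$, so it suffices to prove that $R^\circ$ is nilpotent; since nilpotency of a group is an isomorphism invariant, this immediately transfers back to $Aut_c(G)$.

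The steps, in order, would be as follows. First, apply Theorem 2.3 to the hypothesis that $G$ is a purely non-abelian finite group, obtaining that $R$ is a radical ring. Second, note that $R$ is finite (it is a subgroup of $Hom(G,Z(G))$ with $G$ finite), so being a finite radical ring it is in fact nilpotent; this is precisely the statement recorded in Corollary 2.7, which rests on the classical fact that the Jacobson radical of an artinian ring is nilpotent. Third, feed the nilpotent ring $R$ into Lemma 1.2 to conclude that $R^\circ$ is nilpotent, with nilpotency class bounded by the nilpotency class of $R$. Composing with the isomorphism of the first step yields that $Aut_c(G)$ is nilpotent.

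There is essentially no remaining obstacle at this stage: all the substantive work has been carried out upstream. The only point worth isolating is the passage from \emph{radical} to \emph{nilpotent} for $R$, which fails for general (infinite) rings and genuinely relies on finiteness, i.e. on artinianness; but for a finite group it is immediate, and I would simply cite Corollary 2.7 rather than reprove it. The argument therefore collapses to the chain $Aut_c(G) \cong R^\circ$, $R$ nilpotent by Corollary 2.7, hence $R^\circ$ nilpotent by Lemma 1.2.
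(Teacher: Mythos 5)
Your proposal is correct and follows exactly the route the paper intends (it states this corollary without a written proof, as the immediate composition of Proposition 2.2, the nilpotency of the finite radical ring $R=Hom(G,Z(G))$, and Lemma 1.2). One slip: the nilpotency of $R$ is Corollary 2.6, not Corollary 2.7 --- the latter is the statement you are proving, so as written your citation is formally circular, though the fact you actually invoke (finite radical rings are nilpotent, via the artinian Jacobson radical, applied after Theorem 2.3) is unambiguously Corollary 2.6.
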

We can also bound the rank of $Aut_c(G)$ using Lemma 1.3.  

\begin{cor}
Let $G$ be a purely non-Abelian finite group.  Then $ r(Aut_c(G))\leq 3  r(R^+)$, where $R$ denotes the ring $Hom(G,Z(G))$.  The bound $3$ can be replaced by $2$ if the order of $Z(G)$ is odd. 
In partucular if $G$ is a $p$-group then, $ r(Aut_c(G))\leq 2  d(G)  d(Z(G))\mbox{ for } p>2\mbox{,  and }  r(Aut_c(G))\leq 3  d(G)  d(Z(G)\mbox{ for } p =2$. 
\end{cor}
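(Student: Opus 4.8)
The plan is to transport the whole statement to the ring $R = Hom(G,Z(G))$ and then quote Lemma 1.3. First I would observe that, because $G$ is a purely non-abelian finite group, Theorem 2.3 makes $R$ radical, so $R^\circ = R$; and Proposition 2.2 identifies $Aut_c(G)$ with $R^\circ$ through the Adney-Yen map. Hence $r(Aut_c(G)) = r(R^\circ)$, and since $R$ is a finite radical ring, Lemma 1.3 will immediately give $r(Aut_c(G)) \leq 3\,r(R^+)$, which is the first assertion.

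To get the sharper constant $2$ when $|Z(G)|$ is odd, I would show that this hypothesis forces $|R|$ to be odd, after which the odd-order clause of Lemma 1.3 applies. The key step is to note that the additive group $R^+ = Hom(G,Z(G))$ has exponent dividing $exp(Z(G))$: taking $n = exp(Z(G))$, for every $h \in R$ one finds $(n\,h)(x) = h(x)^n = 1$ because $h(x) \in Z(G)$. Therefore every prime dividing $|R^+|$ divides $exp(Z(G))$, hence divides $|Z(G)|$; if $|Z(G)|$ is odd then $2 \nmid |R|$, and Lemma 1.3 yields $r(R^\circ) \leq 2\,r(R^+)$.

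For the $p$-group specialisation I would compute $r(R^+)$ explicitly. Since $Z(G)$ is abelian, every homomorphism $G \to Z(G)$ factors through $G/G'$, so $R^+ \cong Hom(G/G',Z(G))$ with $G/G'$ and $Z(G)$ both finite abelian $p$-groups; Lemma 1.1 then gives $r(R^+) = r(G/G')\,r(Z(G))$. I would next use that for a finite abelian $p$-group the rank equals the minimal number of generators, so $r(Z(G)) = d(Z(G))$ and $r(G/G') = d(G/G')$, and that $d(G/G') = d(G)$ for a $p$-group (Burnside basis theorem). This gives $r(R^+) = d(G)\,d(Z(G))$, whence $r(Aut_c(G)) \leq 2\,d(G)\,d(Z(G))$ for $p>2$ (as $Z(G)$ then has odd order) and $r(Aut_c(G)) \leq 3\,d(G)\,d(Z(G))$ for $p=2$.

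The substantive work is confined to two routine reductions, and the hardest of these will be the identification $r(R^+) = d(G)\,d(Z(G))$: one must remember to pass to the abelianization $G/G'$ before Lemma 1.1 can be applied, and then reconcile the various notions of rank and generator number for finite abelian $p$-groups. The exponent argument converting ``$|Z(G)|$ odd'' into ``$|R|$ odd'' is the other place where a small amount of genuine (if elementary) care is needed.
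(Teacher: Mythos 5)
Your proposal is correct and follows essentially the same route as the paper: the first assertion via Theorem 2.3, Proposition 2.2 and Lemma 1.3, and the $p$-group specialisation by factoring through $G'$ to get $Hom(G,Z(G)) \cong Hom(G/G',Z(G))$ and applying Lemma 1.1. You merely make explicit two steps the paper leaves implicit --- that $|Z(G)|$ odd forces $|R|$ odd (via the exponent of $R^+$), and that $r(G/G') = d(G/G') = d(G)$ --- both of which are handled correctly.
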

\begin{proof}
The first part follows from Lemma 1.3.  For the second observe that every homomorphism  $h:G \rightarrow Z(G)$ can be factorized on $G\prime $, this induces an isomorphism between the two groups $Hom(G,Z(G))$ and $Hom(G/G\prime,Z(G))$.  The result follows now from Lemma 1.1. 
\end{proof}
\section{\bf {\bf \em{\bf   Adjoint groups  of $p$-nil rings}}}
\vskip 0.4 true cm

In this section we investigate more closely the structure of  $Aut_c(G)$ when $G$ is a finite  $p$-group with $Z(G) \leq \Phi (G)$.  This situation motivates the introduction of the following notions.  
\begin{defn}
Let $p$ be a prime number and $R$ be a ring.  We say that $R$ is left (right, resp) $p$-nil if every element $x$ of order $p$ in $R^+$ is a left (right, resp) annihilator of $R$, that is $ p x =0$ implies $xy=0$ $ (yx=0\mbox{, resp})\mbox {, for all }y\in R$. The ring $R$ is said to be $p$-nil if it is left and right $p$-nil. 
\end{defn}

For instance, the subring $S=pR$ of any ring $R$ is $p$-nil.  Also we check easily that the left and the right annihilators of  $\Omega_1(R^+)$ are respectively right and left $p$-nil.
\\ The following theorems shed some lights on the structure of the adjoint groups of these rings.

\begin{thm}
Let $R$ be a ring with an additive group of finite exponent $p^m$.  If $R$ is  left or right $p$-nil, then $R$ is nilpotent of class at most $m$.  In particular the adjoint group $R^\circ$ is nilpotent of class at most $m$.
\end{thm}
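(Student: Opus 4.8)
The plan is to prove the sharper ring-theoretic statement $R^{m+1}=0$ directly; the assertion about the adjoint group then follows at once from Lemma 1.2, since a ring of nilpotency class at most $m$ has $R^\circ$ nilpotent of class at most $m$. I would treat the left $p$-nil case, the right case being entirely symmetric (replace every left multiplication by a right one throughout). The additive exponent hypothesis gives $p^m R = 0$, and the whole idea is to exploit the $p$-nil condition as a device that trades one factor of $p$ for one extra factor of $R$: namely, whenever an element $x$ satisfies $px=0$, i.e.\ $x\in\Omega_1(R^+)$, the left $p$-nil hypothesis forces $xR=0$.

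Concretely, I would prove by induction on $k$ the family of identities
$$p^{m-k}\,R^{k+1}=0 \qquad (0\le k\le m).$$
The base case $k=0$ is exactly $p^m R=0$. For the inductive step, assume $p^{m-k}R^{k+1}=0$. Every element $x$ of the additive subgroup $p^{m-k-1}R^{k+1}$ then satisfies $px\in p^{m-k}R^{k+1}=0$, so $x\in\Omega_1(R^+)$; by the left $p$-nil hypothesis $xR=0$. Summing these relations over a generating set of the subgroup yields $\bigl(p^{m-k-1}R^{k+1}\bigr)R=p^{m-k-1}R^{k+2}=0$, which is the statement for $k+1$. Setting $k=m$ gives $R^{m+1}=0$, so $R$ is nilpotent of class at most $m$, and Lemma 1.2 completes the proof.

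I do not expect a genuine obstacle here: the argument is a clean simultaneous induction, and the only point requiring care is the bookkeeping that keeps the powers of $p$ and the powers of $R$ in step, together with the verification that the single implication $px=0\Rightarrow xR=0$ is exactly what is needed to pass from $p^{m-k}R^{k+1}=0$ to $p^{m-k-1}R^{k+2}=0$. Once the induction hypothesis is phrased with both exponents present, each step is routine, and no appeal to the Jacobson radical or to finiteness is required.
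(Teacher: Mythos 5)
Your proof is correct and follows essentially the same route as the paper: the paper proves $p^{m-n+1}R^n=0$ by induction on $n$, which after the substitution $n=k+1$ is exactly your identity $p^{m-k}R^{k+1}=0$, with the same mechanism of trading one factor of $p$ for one factor of $R$ via the left $p$-nil condition, and the same appeal to Lemma 1.2 at the end. Your version is if anything slightly more careful, since you explicitly handle sums over a generating set of $R^{k+1}$, a point the paper glosses over by working with a single $x\in R^n$.
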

\begin{proof}
Assume that $R$ is left $p$-nil.  We proceed by induction on $n$ to prove that $p^{m-n+1}R^n=0$.  This is obvious for $n=1$.  Now if $x \in R^n$, then by induction $p^{m-n+1}x=0$.  It follows that $p^{m-n}x$ has order $1$ or $p$, therefore $ (p^{m-n}x)y=p^{m-n}(xy)=0$, for all $y\in R$.  This shows that $p^{m-n}R^{n+1}=0$.  Now, for $n=m+1$ we have $R^{m+1}=0$, this prove that $R$ is nilpotent of class at most $m$.  The result follows  for $R$ right $p$-nil by a similar argument.  The second assertion follows from Lemma 1.2.  
\end{proof}
\begin{lem}
If $R$ is a left (right, resp) $p$-nil ring, then the factor ring $R/\Omega_n(R)$ is left (right, resp) $p$-nil for all $n \geq 1$, where $\Omega_n(R)$ denotes the ideal $\{x \in R, p^nx=0 \}$.
\end{lem}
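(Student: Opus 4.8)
The plan is to reduce the statement about the quotient $R/\Omega_n(R)$ to a direct application of the left (resp.\ right) $p$-nil hypothesis on $R$ itself; the only real content is a bookkeeping shift between $\Omega_n$ and $\Omega_{n+1}$. First I would confirm that $\Omega_n(R)$ really is a two-sided ideal, so that the factor ring is defined. It is plainly an additive subgroup, and if $p^n x = 0$ then for any $y \in R$ the scalar identities $p^n(xy) = (p^n x)y$ and $p^n(yx) = y(p^n x)$ (valid in any ring, since integer scalars pass through products) give $p^n(xy) = p^n(yx) = 0$, so $xy, yx \in \Omega_n(R)$. This step uses no hypothesis on $R$.

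Next I would unwind what the defining condition for left $p$-nilness means in $\bar R := R/\Omega_n(R)$. Writing $\bar x = x + \Omega_n(R)$, the hypothesis ``$p\bar x = 0$'' says exactly that $px \in \Omega_n(R)$, i.e.\ $p^n(px) = p^{n+1}x = 0$, so $x \in \Omega_{n+1}(R)$. The conclusion I want, ``$\bar x \bar y = 0$ for every $y$'', says exactly that $xy \in \Omega_n(R)$, i.e.\ $p^n(xy) = 0$. Thus the whole lemma is equivalent to the single implication: $p^{n+1}x = 0 \Rightarrow p^n(xy) = 0$ for all $y \in R$.

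The key step then sets $z = p^n x$. From $p^{n+1}x = 0$ we get $pz = 0$, so $z$ has additive order dividing $p$, and the left $p$-nil hypothesis applies to it verbatim, yielding $zy = 0$ for every $y \in R$. Since $zy = (p^n x)y = p^n(xy)$, this is precisely $p^n(xy) = 0$, i.e.\ $xy \in \Omega_n(R)$, which is what was wanted. The right-$p$-nil case is identical after replacing $p^n(xy) = (p^n x)y$ by $p^n(yx) = y(p^n x)$ throughout.

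I do not expect a genuine obstacle here; the argument is a short chain of scalar manipulations. The one point deserving care is the index shift: multiplication by $p$ in the quotient corresponds to passing from $\Omega_n$ to $\Omega_{n+1}$ downstairs, and it is exactly this single extra factor of $p$ that lets the element $z = p^n x$ land in the kernel of multiplication by $p$, where the $p$-nil hypothesis can be invoked. Keeping that shift straight is the whole game.
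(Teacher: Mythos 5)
Your proof is correct and follows essentially the same route as the paper's: both deduce from $p\bar x = \bar 0$ that $p^{n+1}x = 0$, apply the left (resp.\ right) $p$-nil hypothesis to the element $p^n x \in \Omega_1(R)$, and conclude $xy \in \Omega_n(R)$ via $(p^n x)y = p^n(xy)$. Your explicit check that $\Omega_n(R)$ is a two-sided ideal is a harmless addition that the paper leaves implicit.
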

\begin{proof}
Assume that $R$ is left $p$-nil, and  let be $\overline x \in R/\Omega_n(R) $ such that $ p\overline x = \overline 0$.  Then $ px \in \Omega_n(R)$, so $p^nx \in \Omega_1(R)$, and by assumption $ (p^nx)y=p^n(xy)=0 \mbox{, for all } y \in R$.  This shows that $ xy \in \Omega_n(R)\mbox{, for all } y \in R$, that is $\overline x$ is a left annihilator of $R/\Omega_n(R)$.  The result follows for $R$ right $p$-nil by a similar argument.  
\end{proof}

\begin{thm}
Let $R$ be a $p$-ring, $p$ odd.  If $R$ is left or right $p$-nil, then $\Omega_{\{n\}}(R^\circ)=\Omega_n(R)$, for every $n \geq 1$.  In particular we have $\Omega_n(R^\circ)=\Omega_{\{n\}}(R^\circ)$.  
\end{thm}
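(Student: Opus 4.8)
The plan is to compute the order of an arbitrary element $g$ in the adjoint group $R^\circ$ directly and to show it equals the additive order of $g$; the asserted identity $\Omega_{\{n\}}(R^\circ)=\Omega_n(R)$ then amounts to the statement that $g^{\circ p^n}=0$ if and only if $p^n g=0$, both sides being subsets of the common underlying set $R$. The computation rests on the elementary identity $1+(x\circ y)=(1+x)(1+y)$ (valid in the unitalisation of $R$), which gives $g^{\circ k}=(1+g)^k-1=\sum_{j=1}^{k}\binom{k}{j}g^j$; in particular $g^{\circ p^n}=p^n g+\sum_{j=2}^{p^n}\binom{p^n}{j}g^j$, so everything reduces to estimating the higher terms. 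I note in advance that the argument will be entirely element-wise, so no finiteness, finite exponent, or global nilpotency of $R$ will be needed beyond the hypotheses.

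The key lemma I would isolate is an order bound: if $g$ has additive order $p^a$, then $p^{a-j+1}g^j=0$ for every $j\ge 1$ (so that, incidentally, $g^{a+1}=0$ and the sums above are finite). This I would prove by induction on $j$, the case $j=1$ being the definition of $a$. For the step, the element $p^{a-j}g^j$ is annihilated by $p$, hence lies in $\Omega_1(R)$; by the $p$-nil hypothesis it is therefore a one-sided annihilator of $R$, and multiplying it by $g$ on the appropriate side yields $p^{a-j}g^{j+1}=0$, the next instance. This is the only place where the $p$-nil assumption enters, and it is used in the same way whether $R$ is left or right $p$-nil.

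It then remains to feed this bound into two arithmetic facts about $\binom{p^n}{j}$. First, from $j\binom{p^n}{j}=p^n\binom{p^n-1}{j-1}$ one gets $v_p\binom{p^n}{j}\ge n-v_p(j)$, where $v_p$ denotes the $p$-adic valuation; secondly, and crucially, $j-v_p(j)\ge 2$ for every $j\ge 2$ when $p$ is odd (this fails for $p=2$ at $j=2$, which is exactly why oddness is assumed). Combining these with the order bound shows that $p^{a-1-v_p(j)}g^j=0$, and hence $p^{a-n-1}\binom{p^n}{j}g^j=0$, for all $j\ge 2$. Consequently $p^{a-n-1}g^{\circ p^n}=p^{a-1}g$ whenever $a\ge n+1$: if $g^{\circ p^n}=0$ this forces $p^{a-1}g=0$, contradicting the choice of $a$, so $a\le n$, i.e. $p^n g=0$. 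Conversely, when $p^n g=0$ (that is, $a\le n$) the same two estimates annihilate every term with $j\ge 2$ as well as $p^n g$, giving $g^{\circ p^n}=0$. This establishes $\Omega_{\{n\}}(R^\circ)=\Omega_n(R)$.

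For the final assertion I would observe that $\Omega_n(R)=\{x\in R:p^nx=0\}$ is an ideal of $R$, hence closed under the circle operation and under circle-inversion, so it is a subgroup of $R^\circ$; since it coincides with the set $\Omega_{\{n\}}(R^\circ)$ of all elements of $R^\circ$ of order dividing $p^n$, that set is a subgroup and therefore equals the subgroup $\Omega_n(R^\circ)$ it generates. The main obstacle, and the real content of the argument, is the simultaneous bookkeeping of the two $p$-valuations: one must check that the single diagonal term $p^{a-1}g$ is precisely the one surviving multiplication by $p^{a-n-1}$, while every off-diagonal term $\binom{p^n}{j}g^j$ is killed, and this balance is exactly what the inequality $j-v_p(j)\ge 2$ secures for odd $p$.
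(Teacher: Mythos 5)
Your proof is correct, but it takes a genuinely different route from the paper. The paper proves the case $n=1$ by hand (if $px=0$ then $x^i=0$ for $i\geq 2$, so $x^{(p)}=px=0$; conversely, multiplying $px=-\sum_{i\geq 2}\binom{p}{i}x^i$ by $p^{m-2}$ forces the additive order $p^m$ of $x$ to satisfy $m\leq 1$) and then inducts on $n$ through the quotient rings $R/\Omega_{n-1}(R)$, which requires its Lemma 3.3 asserting that the one-sided $p$-nil property passes to these quotients. You instead handle $p^n$ in a single direct computation: your element-wise order bound $p^{a-j+1}g^j=0$ (an element-level analogue of the induction $p^{m-n+1}R^n=0$ in the paper's Theorem 3.2), combined with $v_p\binom{p^n}{j}\geq n-v_p(j)$ and the inequality $j-v_p(j)\geq 2$ for $j\geq 2$ and $p$ odd, kills every off-diagonal term of $g^{\circ p^n}=\sum_{j\geq 1}\binom{p^n}{j}g^j$ at once, so no quotient rings and no analogue of Lemma 3.3 are needed. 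Your version buys three things: it isolates exactly where oddness of $p$ enters (the failure of $j-v_p(j)\geq 2$ at $p=2$, $j=2$, which in the paper is hidden in the divisibility of $\binom{p}{2}$ by $p$); it is entirely element-wise, so it visibly needs no finiteness or finite-exponent hypothesis; and the by-product $g^{a+1}=0$ shows every element of a one-sided $p$-nil $p$-ring is nilpotent, hence $R$ is radical and $R^\circ=R$ --- a point needed to make sense of $\Omega_n(R)\subseteq\Omega_{\{n\}}(R^\circ)$ that the paper leaves implicit and that you should state explicitly rather than just asserting both sides are subsets of $R$. The paper's approach buys modularity (Lemma 3.3 is reused elsewhere) and lighter bookkeeping, since all valuation arithmetic is confined to exponent $p$. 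Two small points of hygiene in your write-up: the bound $p^{a-1-v_p(j)}g^j=0$ should be read with the convention that $g^j=0$ for $j\geq a+1$ (so only $2\leq j\leq a+1$ matters), and in the step $p^{a-n-1}\binom{p^n}{j}g^j=0$ you use $v_p(j)\leq n\leq a-1$, which holds because $j\leq p^n$ and $a\geq n+1$; both are implicit in your argument but worth a line. Your closing argument for $\Omega_n(R^\circ)=\Omega_{\{n\}}(R^\circ)$ --- that $\Omega_n(R)$ is an ideal, closed under circle and under quasi-inversion via $y=-x-xy$, hence a subgroup coinciding with the generating set $\Omega_{\{n\}}(R^\circ)$ --- is the same as the paper's, just spelled out in more detail.
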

\begin{proof}
We denote by $ x^{(k)}$ the $k{th}$ power of $x$ in the adjoint group of $R$.
\\For $n=1$ we have, if $px= 0$ then $x^i =0 \mbox{ for } i\geq 2$.  Hence $$ x^{(p)}=\sum_{i\geq 1}\binom{p}{i} x^i =px=0,$$
 and so $x\in \Omega_{\{1\}}(R^\circ)$. Conversely, if $ x^{(p)}=0$ then $$ px= -\sum_{i\geq 2}\binom{p}{i} x^i.$$
 Let $p^m$ be the additive order of $x$.  If $m\geq 2$, then $ p^{m-1}x$ has order $p$, hence $ p^{m-1}x^2=0$, and similarly we obtain $p^{m-2}x^i=0\mbox{, for } i\geq 3$.  Now if we multiply the above equation by $p^{m-2}$ we obtain $$ p^{m-1}x= -\sum_{i\geq 2}\binom{p}{i} p^{m-2} x^i=0$$ 
This contradicts the definition of the order of $x$.  Therefore $m\leq 1$, and so $x \in \Omega_{1}(R)$.
\\ Now we proceed by induction on $n$.  If $x \in \Omega_n(R)$, then $ px \in \Omega_{n-1}(R)$.  This implies that $ x+\Omega_{n-1}(R) \in \Omega_1( R/\Omega_{n-1}(R))$.  Lemma 3.3  and the first step imply that $x+\Omega_{n-1}(R) \in \Omega_{\{1\}}( (R/\Omega_{n-1}(R))^\circ)$.  Hence $ x^{(p)} \in \Omega_{n-1}(R) $, and by induction $ x^{(p)} \in \Omega_{\{n-1\}}(R^\circ) $.  Thus  $ x \in \Omega_{\{n\}}(R^\circ) $.  It follows that $\Omega_n(R) \subset \Omega_{\{n\}}(R^\circ)$.  The inverse inclusion follows similarly.  
\\Finally, the equality  $\Omega_n(R^\circ)=\Omega_{\{n\}}(R^\circ)$ follows from the fact that $(\Omega_n(R))^\circ$ is a subgroup of $R^\circ$ and $\Omega_n(R^\circ)$ is generated by $\Omega_{\{n\}}(R^\circ)$.                                      
\end{proof}
\begin{cor}
Let $R$ be a $p$-ring, $p$ odd.  If $R$ is $p$-nil, then $\Omega_{1}(R^\circ) \leq Z(R^\circ)$, in other word $R^\circ$ is $p$-central.
\end{cor}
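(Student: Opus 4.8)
The plan is to deduce this almost immediately from Theorem 3.4 together with the defining annihilation property of a $p$-nil ring; essentially all the genuine work has already been done in Theorem 3.4, so the corollary should be a short consequence. First I would specialize Theorem 3.4 to the case $n=1$. Since $R$ is a $p$-ring with $p$ odd and is $p$-nil (hence in particular left $p$-nil, which is all Theorem 3.4 requires), we obtain $\Omega_{\{1\}}(R^\circ)=\Omega_1(R)$, and the ``in particular'' clause gives $\Omega_1(R^\circ)=\Omega_{\{1\}}(R^\circ)$. Combining these two identities, the subgroup $\Omega_1(R^\circ)$ — a priori only the subgroup \emph{generated} by the elements of adjoint order dividing $p$ — coincides as a set with $\Omega_1(R)=\{x\in R:\ px=0\}$, the additive $p$-torsion of $R$.

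Next I would bring in the $p$-nil hypothesis itself. For any $x\in\Omega_1(R)$ we have $px=0$, so by Definition 3.1 the element $x$ is simultaneously a left and a right annihilator of $R$; that is, $xy=0$ and $yx=0$ for every $y\in R$. The final step is then a one-line computation in the adjoint monoid. Taking $x\in\Omega_1(R^\circ)=\Omega_1(R)$ and any $y\in R^\circ$ (which is a subset of $R$), and using that $xy=yx=0$ together with the commutativity of addition in $R^+$, we get
$$ x\circ y = x+y+xy = x+y = y+x = y+x+yx = y\circ x. $$
Hence $x$ commutes with every element of $R^\circ$, so $x\in Z(R^\circ)$; since $x$ was arbitrary this yields $\Omega_1(R^\circ)\leq Z(R^\circ)$, which is precisely the assertion that $R^\circ$ is $p$-central.

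The step I expect to carry the real weight is not any of the above but the invocation of Theorem 3.4: the nontrivial content is the identification $\Omega_{\{1\}}(R^\circ)=\Omega_1(R)$, which rests on the odd-prime binomial estimates in its proof and is exactly why the hypothesis $p$ odd cannot be dropped. Given that identification, the only subtlety to flag is that $\Omega_1(R^\circ)$ must genuinely equal the \emph{set} of order-$p$ elements (so that every one of them is of the annihilating additive-torsion form), and this is guaranteed by the ``in particular'' part of Theorem 3.4 rather than by any separate argument.
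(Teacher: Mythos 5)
Your proposal is correct and follows essentially the same route as the paper: invoke Theorem 3.4 with $n=1$ to identify $\Omega_1(R^\circ)$ with the additive torsion subgroup $\Omega_1(R)$, then use the two-sided annihilation property of $p$-nil rings so that $x\circ y=x+y=y\circ x$ for all $y\in R^\circ$. Your explicit flagging of the ``in particular'' clause (needed so that $\Omega_1(R^\circ)$ is the \emph{set} of order-$p$ elements, not merely generated by them) makes precise a point the paper's two-line proof leaves implicit.
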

\begin{proof}
Every element $x$ of  $\Omega_{1}(R^\circ)\mbox{ lies }  \Omega_{1}(R)$  by the above theorem.  Hence $x$ is an annihilator of $R$, and so it lies in the center of $R^\circ$.     
\end{proof}
Note that this can be used to prove Lemma 1.3 among the same lines of Dickenschied proof ([6]), only we use the fact that the group $(pR)^{\circ}$ is p-central instead of being powerful (a finite $p$-group $G$ is powerful if $G/G^p$ ($G/G^4$, for $p=2$) is abelian), and the fact that the rank of a p-central finite p-group $G$ is bounded by $d(Z(G))$ by a result of Thompson (see [7, III, Hilfssatz 12.2]).  It seems that this alternative proof is simpler, since it is easier to prove that $(pR)^{\circ}$ is p-central than proving that is powerful, but unfortunately this proof does not deal with the prime $p=2$.

In connection with central automorphisms we have
\begin{prop}
If $G$ is a finite $p$-group such that $Z(G) \leq \Phi (G)$, then the ring $Hom(G,Z(G))$ is right $p$-nil.
\end{prop}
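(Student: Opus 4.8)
The plan is to fix an arbitrary $h\in R=Hom(G,Z(G))$ with $ph=0$ in $R^{+}$ and to show that $h$ right-annihilates $R$, i.e.\ that $yh=0$ for every $y\in R$; by the definition of a right $p$-nil ring given above, this is exactly what has to be established.

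First I would translate the additive hypothesis into the multiplicative language of $Z(G)$. Since the addition of $R^{+}$ is pointwise multiplication in $Z(G)$, the relation $ph=0$ says precisely that $h(g)^{p}=1$ for every $g\in G$, so $\mathrm{Im}(h)\subseteq\Omega_{1}(Z(G))$. The crucial step is then to deduce that $h$ vanishes on the Frattini subgroup $\Phi(G)$. Because $Z(G)$ is abelian, $h$ kills every commutator, so $h(G')=1$; and because $h(g^{p})=h(g)^{p}=1$ for all $g$, the homomorphism $h$ kills every $p$-th power, so $h(G^{p})=1$. Using the standard identity $\Phi(G)=G^{p}G'$ for a finite $p$-group, these two facts give $h(\Phi(G))=1$.

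Finally I would bring in the hypothesis $Z(G)\leq\Phi(G)$ together with the multiplication rule of the ring, which under the convention $\delta_{1}\delta_{2}(x)=\delta_{2}(\delta_{1}(x))$ reads $(yh)(g)=h(y(g))$. For any $y\in R$ and any $g\in G$ we have $y(g)\in Z(G)\leq\Phi(G)\subseteq\ker h$, whence $h(y(g))=1$; thus $yh=0$, as required.

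The argument is short, so no single step is a genuine obstacle; the points that require care are bookkeeping. One must keep two conventions straight: the passage between the additive order in $R^{+}$ and orders measured multiplicatively in $Z(G)$, and the order of composition hidden in the product $yh$. Indeed it is precisely because the annihilating factor $h$ is applied \emph{last} in $(yh)(g)=h(y(g))$ that one obtains right rather than left $p$-nilpotence: for $hy$ one would instead need $y$ to vanish on $\mathrm{Im}(h)$, which the hypotheses do not provide. The only external input is the identity $\Phi(G)=G^{p}G'$, and the real content is the observation that $ph=0$ already forces $h$ to vanish on all of $\Phi(G)$, after which $Z(G)\le\Phi(G)$ finishes the proof immediately.
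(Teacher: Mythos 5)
Your proof is correct and takes essentially the same route as the paper: both arguments observe that $ph=0$ forces $\mathrm{Im}(h)\subseteq\Omega_1(Z(G))$, so that $\Phi(G)\leq\ker h$, and then combine $Z(G)\leq\Phi(G)$ with the composition convention $(yh)(x)=h(y(x))$ to conclude $yh=0$. The only difference is cosmetic: you verify via $\Phi(G)=G^pG'$ the standard fact that any homomorphism onto an elementary abelian $p$-group kills the Frattini subgroup, which the paper simply invokes.
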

\begin{proof}
Let be $k,h \in Hom(G,Z(G))$ such that $ph=0$.  Then $h :G \rightarrow \Omega_1(Z(G))$.  Since the image of $h$ is an elementary abelian $p$-group, its kernel contains the frattini subgroup, and since $Z(G) \leq \Phi (G)$ we have $kh(x)=h(k(x))=1 \mbox{, for all } x \in G$.  It follows that $h$ is a right annihilator of the ring $Hom(G,Z(G))$.  
\end{proof}
The above proposition leads to a new proof of Theorem 4.8  in [9].
\begin{cor}
If $G$ is a finite $p$-group such that $Z(G) \leq \Phi (G)$, then $Aut_c(G)$ is nilpotent of class at most $min\{r,s\}$, where $exp(G/G\prime)=p^r$ and $exp(Z(G))=p^s$. 
\end{cor}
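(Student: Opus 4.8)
The plan is to assemble Propositions 2.2 and 3.5 with Theorem 3.1, so that the whole statement reduces to computing the exponent of the additive group of the ring $R = Hom(G,Z(G))$.

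First I would invoke Proposition 3.5: since $Z(G) \leq \Phi(G)$, the ring $R = Hom(G,Z(G))$ is right $p$-nil. Its additive group $R^+$ is a finite abelian $p$-group, say of exponent $p^m$. By Theorem 3.1 the adjoint group $R^\circ$ is then nilpotent of class at most $m$, and by Proposition 2.2 the Adney-Yen map identifies $R^\circ$ with $Aut_c(G)$. Hence $Aut_c(G)$ is nilpotent of class at most $m$, and it suffices to show that $m = \min\{r,s\}$.

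The remaining step is to evaluate $\exp(R^+)$. The key observation is that $Z(G)$ is abelian, so every homomorphism $h : G \to Z(G)$ kills the derived subgroup $G'$ and therefore factors through $G/G'$; this yields an additive (indeed ring) isomorphism $Hom(G,Z(G)) \cong Hom(G/G',Z(G))$, exactly as already noted in the proof of Corollary 2.9. Now both $G/G'$ and $Z(G)$ are finite abelian $p$-groups with $\exp(G/G') = p^r$ and $\exp(Z(G)) = p^s$, so Lemma 1.1 gives $\exp\bigl(Hom(G/G',Z(G))\bigr) = \min\{p^r,p^s\} = p^{\min\{r,s\}}$. Consequently $m = \min\{r,s\}$, and the claimed bound on the nilpotency class follows.

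There is no genuinely hard step here; the argument is essentially a bookkeeping of the earlier results. The only point that requires care is the reduction to the abelianization: Lemma 1.1 is stated only for abelian groups, so one must first rewrite $Hom(G,Z(G))$ as $Hom(G/G',Z(G))$ before the exponent computation becomes available. Once that replacement is made, the evaluation of the exponent is immediate and the bound drops out.
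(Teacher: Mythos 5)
Your proof is correct and follows the paper's own argument exactly: right $p$-nilness of $Hom(G,Z(G))$ from $Z(G)\leq\Phi(G)$, the class bound for the adjoint group of a $p$-nil ring, the Adney-Yen isomorphism, and the exponent computation via $Hom(G,Z(G))\cong Hom(G/G',Z(G))$ and Lemma 1.1. The only discrepancies are in statement numbering (the paper's Theorem 3.2 and Proposition 3.6 are your Theorem 3.1 and Proposition 3.5), which is immaterial.
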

\begin{proof}
By Theorem 3.2 the nilpotency class of $Aut_c(G)$ does not exceed $m$, where $p^m$ is the exponent of $Hom(G,Z(G)) \cong Hom(G/G\prime,Z(G))$ which is equal to $p^{min\{r,s\}}$ by Lemma 1.1.
\end{proof}

\begin{thm}
If $G$ is a finite $p$-group with $p$ odd, such that $Z(G) \leq \Phi (G)$, then $$ \Omega_{n}(Aut_c(G))= \Omega_{\{n\}}(Aut_c(G))= Aut_{Z_n}(G) $$  where $Z_n$ denotes the subgroup $ \Omega_{n}(Z(G))$. 
\end{thm}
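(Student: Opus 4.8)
The plan is to transfer the entire statement, through the Adney--Yen isomorphism, to the ring $R=Hom(G,Z(G))$, where it becomes a direct instance of Theorem 3.4. First I would verify that $R$ meets the required hypotheses. Since $G$ is a finite $p$-group, $Z(G)$ is a finite abelian $p$-group, so by Lemma 1.1 the additive group $R^+$ is a finite $p$-group and $R$ is a finite $p$-ring. By Proposition 3.6 the hypothesis $Z(G)\leq\Phi(G)$ forces $R$ to be right $p$-nil; hence Theorem 3.2 applies and $R$ is nilpotent. In particular $R$ is radical, so $R^\circ=R$, and by Proposition 2.2 the Adney--Yen map $\sigma\mapsto h_\sigma$ is a group isomorphism from $Aut_c(G)$ onto $R^\circ=R$.

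Next I would build the dictionary for the three objects appearing in the theorem. A group isomorphism preserves both the subgroup generated by the elements of order dividing $p^n$ and the set of such elements, so under Adney--Yen the subgroup $\Omega_n(Aut_c(G))$ and the set $\Omega_{\{n\}}(Aut_c(G))$ correspond respectively to $\Omega_n(R^\circ)$ and $\Omega_{\{n\}}(R^\circ)$. For the third object, an automorphism $\sigma$ lies in $Aut_{Z_n}(G)$ precisely when $h_\sigma(x)=x^{-1}\sigma(x)\in Z_n=\Omega_n(Z(G))$ for every $x$, i.e. when the image of $h_\sigma$ is annihilated by $p^n$; in the ring $R$ this reads $p^n h_\sigma=0$. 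Thus $Hom(G,Z_n)$, as a subset of $R$, is exactly the ideal $\Omega_n(R)=\{h\in R: p^n h=0\}$, and since $R^\circ=R$ this ideal is itself radical, so the Adney--Yen isomorphism carries $Aut_{Z_n}(G)$ onto $\Omega_n(R)=(\Omega_n(R))^\circ$.

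With the dictionary fixed, the theorem reduces to the two ring-theoretic equalities $\Omega_n(R^\circ)=\Omega_{\{n\}}(R^\circ)$ and $\Omega_{\{n\}}(R^\circ)=\Omega_n(R)$. Both are supplied by Theorem 3.4, which applies because $p$ is odd and $R$ is (right) $p$-nil. Reading these equalities back through the isomorphism gives $\Omega_n(Aut_c(G))=\Omega_{\{n\}}(Aut_c(G))=Aut_{Z_n}(G)$, which is the asserted chain.

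The calculations are routine once this translation is in place; the step requiring genuine care, and the one I regard as the crux, is the identification of $Aut_{Z_n}(G)$ with the ideal $\Omega_n(R)$. One must match the condition $x^{-1}\sigma(x)\in\Omega_n(Z(G))$ with the purely ring-theoretic condition $p^n h_\sigma=0$, and then check that all three objects lie inside one common group, so that the equalities even make sense. This is exactly where radicality of $R$ enters: it is essential to invoke Theorem 3.2 at the outset to secure $R^\circ=R$, for only then does $\Omega_n(R)$ coincide with the adjoint group $(\Omega_n(R))^\circ$ and sit as a subgroup of $R^\circ\cong Aut_c(G)$.
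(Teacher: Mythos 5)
Your proposal is correct and follows exactly the route the paper intends: the paper's proof of this theorem is the one-line observation that it is an immediate consequence of Theorem 3.4 and Proposition 3.6, and your argument simply makes explicit the details left implicit there (the Adney--Yen isomorphism of Proposition 2.2, radicality of $R=Hom(G,Z(G))$ via Theorem 3.2, and the identification of $Aut_{Z_n}(G)$ with the ideal $\Omega_n(R)=(\Omega_n(R))^\circ$). Your careful handling of that last identification is a worthwhile elaboration, but the underlying approach is the same as the paper's.
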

\begin{proof}
This is an immediate consequence of Theorem 3.4 and Proposition 3.6.
\end{proof}
\section{\bf {\bf \em{\bf   Non-inner central automrphisms of order $p$.}}}
\vskip 0.4 true cm
A longstanding conjecture asserts that every non-abelian finite $p$-group has a non-inner automorphism of order $p$.  More informations about this conjecture can be found for instance in [1]. 
\\First, note that we can reduce it to indecomposable $p$-groups.  
\begin{prop}
Let $G$ be a non-abelian finite $p$-group.  If $G$ is decomposable then $G$ has a non-inner central automorphism of order $p$. 
\end{prop}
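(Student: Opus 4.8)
Since $G$ is decomposable, I would write $G=H\times K$ with $H$ and $K$ nontrivial $p$-groups; then $Z(G)=Z(H)\times Z(K)$ and both direct factors of the centre are nontrivial. The plan is to exhibit one explicit element $h$ of the ring $R=Hom(G,Z(G))$ that, under the isomorphism $Aut_c(G)\cong R^{\circ}$ of Proposition 2.2, has order $p$ and corresponds to a non-inner automorphism. Concretely, choose $b\in Z(K)$ of order $p$, and use $H/\Phi(H)\neq 1$ to pick a nontrivial homomorphism $\beta\colon H\to\langle b\rangle$; then define $h\colon G\to Z(G)$ by $h(x,y)=\beta(x)$, where $\langle b\rangle\le Z(K)\le Z(G)$. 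A direct check shows $h\in Hom(G,Z(G))$.

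Next I would verify that $h$ has order $p$ in $R^{\circ}$. Since $h$ kills $\{1\}\times K$ and its image lies in $\{1\}\times\langle b\rangle\subseteq\{1\}\times K$, composing $h$ with itself gives $h^{2}=0$; in particular $h$ is nilpotent, hence $h\in R^{\circ}$. Its $p$-th circle power is $h^{(p)}=\sum_{i\ge 1}\binom{p}{i}h^{i}=ph=0$, because the image of $h$ has exponent $p$ and $h^{i}=0$ for $i\ge 2$; and $h\neq 0$. Thus $h$ has order $p$ in $R^{\circ}$, so the corresponding central automorphism $\sigma$ has order $p$ in $Aut_c(G)$.

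The crux is to show that $\sigma$ is not inner. An inner automorphism that is central must be a conjugation $\iota_z\colon g\mapsto zgz^{-1}$ with $z\in Z_2(G)$ (the second centre), and under the Adney--Yen correspondence it is sent to the map $g\mapsto g^{-1}zgz^{-1}$. Using $Z_2(G)=Z_2(H)\times Z_2(K)$ and writing $z=(z_H,z_K)$, this map sends $(x,y)$ to $(x^{-1}z_Hxz_H^{-1},\,y^{-1}z_Kyz_K^{-1})$, whose $K$-component depends only on $y$. If $\sigma$ were inner, equating $K$-components with $h$ would force $\beta(x)=y^{-1}z_Kyz_K^{-1}$ for all $x\in H$ and $y\in K$; the right-hand side is independent of $x$, and setting $y=1$ makes it trivial, so $\beta\equiv 1$ --- contradicting the choice of $\beta$. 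Hence $\sigma$ is a non-inner central automorphism of order $p$.

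The only genuinely delicate step is this last one: I must identify exactly which elements of $R$ arise from inner automorphisms and then separate the contribution of $\beta$ from every commutator map. The direct-product splitting of the second centre is what makes this separation clean, since it confines $\beta$ to the $K$-coordinate while every inner (central) map has $K$-component independent of the $H$-variable. I would note, finally, that the construction never used non-abelianness of the factors, so it applies uniformly to any decomposable $p$-group.
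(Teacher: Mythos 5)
Your proposal is correct and follows essentially the same route as the paper: both choose a decomposition $G=H\times K$, build a homomorphism $h$ that kills one factor and maps the other onto a central element of order $p$ in the first, and pass via the Adney--Yen correspondence to the central automorphism $\sigma=1_G+h$ of order $p$. The only divergence is your non-innerness check through the splitting $Z_2(G)=Z_2(H)\times Z_2(K)$, which is valid but heavier than the paper's one-line observation that every inner automorphism preserves the normal subgroup $H\times 1$, whereas $\sigma$ sends $(x,1)$ with $\beta(x)\neq 1$ to $(x,\beta(x))\notin H\times 1$.
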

\begin{proof}
Assume that $G$ is a direct product of $G_1$ and $G_2$, where $G_1$, $G_2$  are non-trivial normal subgroups of $G$.  Let $M$ be a maximal subgroup of $G_1$ and  $g \in{G_1-M}$, clearly every element of $G$ can be written in the form $xg^i$, where $x \in M G_2$.  If $z$  is a central element of order $p$ in $G_2$, then the mapping $ xg^i \mapsto xg^iz^i$ is a central automorphism of $G$ of order $p$ which is not inner since it maps $g \in G_1$ to $gz \notin G_1$. 
\end{proof}
For $p$ odd, the results of the previous section allows us to caracterize the $p$-groups in which every central automorphism of order $p$ is inner.  Let denote $d=d(G)$, $d_1=d(Z(G))$ and $d_2=d(Z(Inn(G))$.    
\begin{thm}
Let $G$ be a finite non-abelian $p$-group, $p$ odd.  In order for $G$ to have a non-inner central automorphism of order $p$ it is necessary and sufficient that $d_2 \neq d\cdot d_1$.
\end{thm}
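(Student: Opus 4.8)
The plan is to prove the contrapositive: \emph{every central automorphism of order $p$ is inner if and only if $d_2=d\cdot d_1$}. First I would set up the dictionary. Since $Aut_c(G)$ is the centralizer of $Inn(G)$, an inner automorphism is central exactly when it lies in $Z(Inn(G))$, so the inner central automorphisms are $Inn(G)\cap Aut_c(G)=Z(Inn(G))\cong Z_2(G)/Z(G)$; those of order dividing $p$ form $\Omega_1(Z_2(G)/Z(G))$, a set of size $p^{d_2}$ contained in $\Omega_{\{1\}}(Aut_c(G))$. Hence ``every central automorphism of order $p$ is inner'' is equivalent to $\Omega_{\{1\}}(Aut_c(G))=\Omega_1(Z_2(G)/Z(G))$, i.e. to $|\Omega_{\{1\}}(Aut_c(G))|=p^{d_2}$. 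On the ring side I set $R=Hom(G,Z(G))$, and note that on $p$-torsion $\Omega_1(R)\cong Hom(G/\Phi(G),\Omega_1(Z(G)))$ has order $p^{d\cdot d_1}$ by Lemma 1.1.

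Next I would introduce the comparison map. For $g\in Z_2(G)$ with $g^p\in Z(G)$ the assignment $x\mapsto[x,g]$ is a homomorphism $G\to Z(G)$ with $[x,g]^p=[x,g^p]=1$, so its image lies in $\Omega_1(Z(G))$ and it factors through $G/\Phi(G)$, giving an element of $\Omega_1(R)$. This defines $\mu:\Omega_1(Z_2(G)/Z(G))\to\Omega_1(R)$, $\overline g\mapsto[\,\cdot\,,g]$, which is injective (its kernel is $Z(G)$) and which is precisely the Adney--Yen image of the inner central automorphisms of order dividing $p$. Injectivity already yields $d_2\le d\cdot d_1$, and the theorem reduces to locating when $\mu$ is onto. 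I would then split into two cases according to whether $Z(G)\le\Phi(G)$.

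In the case $Z(G)\le\Phi(G)$, Proposition 3.6 shows that $R$ is right $p$-nil, so Theorem 3.4 (equivalently Theorem 3.8 with $n=1$) gives $\Omega_{\{1\}}(Aut_c(G))=\Omega_1(R)$ and hence $|\Omega_{\{1\}}(Aut_c(G))|=p^{d\cdot d_1}$. Combined with the opening reduction, every central automorphism of order $p$ is inner iff $p^{d\cdot d_1}=p^{d_2}$, i.e. iff $d_2=d\cdot d_1$, which settles the theorem here. In the case $Z(G)\not\le\Phi(G)$ I would show that both sides of the equivalence are false, i.e. that $G$ has a non-inner central automorphism of order $p$ and that $d_2<d\cdot d_1$. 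Choose a central $z_0\notin\Phi(G)$ and $\phi:G\to\mathbb Z_p$ factoring through $G/\Phi(G)$ with $\phi(z_0)\ne0$, together with $w\in\Omega_1(Z(G))\setminus\{1\}$; then $h(x)=w^{\phi(x)}$ lies in $\Omega_1(R)$ yet $h(z_0)=w^{\phi(z_0)}\ne1$, while every $[\,\cdot\,,g]$ vanishes on the central element $z_0$. Thus $h\notin\operatorname{im}\mu$, so $\mu$ is not surjective and $d_2<d\cdot d_1$. Taking $w\in\Phi(G)$ when $\Omega_1(Z(G))\le\Phi(G)$ forces $\phi(w)=0$, so that $\sigma(x)=x\,w^{\phi(x)}$ satisfies $\sigma^p=\mathrm{id}$ and is a non-inner central automorphism of order $p$; and when $\Omega_1(Z(G))\not\le\Phi(G)$ one splits off the resulting $\mathbb Z_p$ direct factor and invokes Proposition 4.1. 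In either situation the biconditional holds as ``false $\iff$ false''.

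The main obstacle is exactly this second case: once $Z(G)\not\le\Phi(G)$ the ring $R$ need not be $p$-nil, and the clean identity $\Omega_{\{1\}}(Aut_c(G))=\Omega_1(R)$ of Section 3 can fail (idempotent homomorphisms produce elements of $\Omega_1(R)$ that are \emph{not} automorphisms of order $p$), so the order count $p^{d\cdot d_1}$ is no longer available. The two devices that carry the argument past this point are the explicit automorphism $\sigma(x)=x\,w^{\phi(x)}$, whose order and non-innerness can be checked by hand, and the observation that an element of $\Omega_1(R)$ which is nonzero on a central element lying outside $\Phi(G)$ can never be a commutator map $[\,\cdot\,,g]$ — this is what converts $Z(G)\not\le\Phi(G)$ into the strict inequality $d_2<d\cdot d_1$.
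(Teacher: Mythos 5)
Your proof is correct, and while it uses the same toolkit as the paper (the Adney--Yen correspondence, the count $\dim Hom(G,\Omega_1(Z(G)))=d\cdot d_1$ from Lemma 1.1, Theorem 3.4/3.8 via Proposition 3.6, the explicit homomorphism $h(x)=w^{\phi(x)}$, and Proposition 4.1), it assembles them into a genuinely different logical shape. The paper proves the two implications separately and both by contradiction: for necessity it assumes $d_2=d\cdot d_1$ together with a non-inner central $\sigma$ of order $p$, uses your very construction $h(x)=z^{r(x)}$ to force $Z(G)\leq\Phi(G)$, then applies Theorem 3.8 to place $h_\sigma$ in the image $I$ of the inner central automorphisms, a contradiction; for sufficiency it reduces to purely non-abelian $G$ via Proposition 4.1 and invokes Lemma 4.4 (which rests on Lemma 4.3, i.e.\ $\Omega_1(Z(G))\leq\Phi(G)$ forcing $h^2=0$) to get order exactly $p$. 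You instead split on whether $Z(G)\leq\Phi(G)$ and prove the biconditional directly in each case: when $Z(G)\leq\Phi(G)$, Theorem 3.4 gives $\lvert\Omega_{\{1\}}(Aut_c(G))\rvert=p^{d\cdot d_1}$ while your commutator map $\mu:\overline g\mapsto[\,\cdot\,,g]$ identifies the inner ones as a subspace of size $p^{d_2}$, so the equivalence falls out of a cardinality comparison; when $Z(G)\nleq\Phi(G)$ you show both sides of the biconditional are false, checking $\sigma(x)=x\,w^{\phi(x)}$ has $\sigma^p=1$ by hand (or splitting off a $\mathbb{Z}_p$ factor and citing Proposition 4.1 when $\Omega_1(Z(G))\nleq\Phi(G)$), and getting $d_2<d\cdot d_1$ from the observation that commutator maps vanish on $Z(G)$. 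Your route dispenses with Lemma 4.4 entirely, replacing its first half by the transparent map $\mu$ and its second half by Theorem 3.4 plus a direct computation, and it makes explicit the general inequality $d_2\leq d\cdot d_1$ (strict when $Z(G)\nleq\Phi(G)$), which the paper leaves implicit; the paper's version is shorter because the contradiction framework lets one construction serve both to force $Z(G)\leq\Phi(G)$ and to finish, but yours is arguably more informative. One small point worth recording in a final write-up: $\sigma(x)=x\,w^{\phi(x)}$ must be checked to be bijective (if $x\,w^{\phi(x)}=1$ then $x\in\langle w\rangle\leq\ker\phi$, whence $x=1$), which your sketch asserts implicitly.
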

For instance, the $p$-groups of maximal class satisfy this condition, as well as the class of non-abelian finite $p$-central $p$-groups, this follows easily from [7, III, Hilfssatz 12.2]. 
We need the following two lemmas to prove Theorem 4.2.
\begin{lem}
If $G$ is a purely non-abelian finite $p$-group, then  $\Omega_1(Z(G))\leq  \Phi (G)$.  In particular if $exp(Z(G))=p$ then $Z(G)\leq  \Phi (G)$.
\end{lem}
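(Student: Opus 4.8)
The plan is to argue by contraposition. I will show that if some non-trivial $z \in \Omega_1(Z(G))$ fails to lie in $\Phi(G)$, then $G$ admits a non-trivial abelian direct factor, contradicting the hypothesis that $G$ is purely non-abelian. Such a $z$ is a central element of order exactly $p$ (the case $z=1$ being trivially inside $\Phi(G)$), and the crux of the matter is that a central element of order $p$ lying outside the Frattini subgroup always splits off as a direct factor $\langle z\rangle \cong \mathbb{Z}_p$.

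To produce the splitting I would pass to the elementary abelian quotient $V = G/\Phi(G)$, viewed as a vector space over $\mathbb{F}_p$. Since $z \notin \Phi(G)$, its image $\bar z$ is a non-zero vector, so I can choose a hyperplane $W \leq V$ with $\bar z \notin W$; its preimage $M$ in $G$ is then a maximal subgroup with $\Phi(G) \leq M$ and $z \notin M$. The verification that $G = \langle z\rangle \times M$ is then routine: $M$ is normal as a maximal subgroup of a $p$-group and $\langle z\rangle$ is normal because $z$ is central; the intersection $\langle z\rangle \cap M$ is trivial since $\langle z\rangle$ has prime order $p$ and $z \notin M$; and $\langle z\rangle M = G$ because $\langle z\rangle M$ contains $\Phi(G)$ while surjecting onto all of $V = \langle \bar z\rangle + W$. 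Hence $\langle z\rangle$ is a non-trivial abelian direct factor of $G$, which is the desired contradiction and establishes $\Omega_1(Z(G)) \leq \Phi(G)$.

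The final assertion is immediate: if $exp(Z(G))=p$ then every element of $Z(G)$ is annihilated by $p$, so $\Omega_1(Z(G)) = Z(G)$ and the first part yields $Z(G) \leq \Phi(G)$. I expect no serious obstacle in this argument; the only point demanding care is the direct-product verification, and within it the observation that the complement $M$ can be arranged to contain $\Phi(G)$, so that the equality $\langle z\rangle M = G$ can be read off entirely from the quotient $G/\Phi(G)$. Choosing $M$ as the preimage of a hyperplane missing $\bar z$ is exactly what guarantees this, and it is the step on which the whole reduction hinges.
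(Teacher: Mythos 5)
Your argument is correct and is essentially the paper's own (first) proof: the paper likewise takes $z \in \Omega_1(Z(G)) \setminus \Phi(G)$, picks a maximal subgroup $M$ with $z \notin M$, and concludes $G \cong \langle z\rangle \times M$, contradicting pure non-abelianness; you have merely spelled out the routine verification of the direct decomposition. (The paper also records a second, independent proof via nilpotency of the ring $Hom(G,Z(G))$, but your route matches its primary one.)
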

\begin{proof}
Let $z \in  \Omega_1(Z(G))$.  If there exists a maximal subgroup $M$ such that $z \not \in M$, then $G \cong <z> \times M$. Thus $G$  is not purely non-abelian. 
\\This is another proof based on the nilpotency of the ring $Hom(G,Z(G))$.
Let be $z \in \Omega_1(Z(G))$.  To each homomorphism $r : G \rightarrow \mathbb{Z}_p$  we can associate an endomorphism $h \in Hom(G,Z(G))$ by setting $h(x)=r(x)z  \mbox{, for all } x \in G$.  This implies that $h^n(z)=r(z)^nz$.  By corollary 2.6, $h$ is nilpotent, so there exists an integer $n$ such that $r(z)^n=0 $. Therefore $r(z)=0$, since $\mathbb{Z}_p$ is a field.  This shows that $z$ lies in the intersection of the set of all kernels of homomorphisms from $G$ to $\mathbb{Z}_p$. Since every maximal subgroup of $G$ occurs as a kernel of some homomorphism  $r:G \rightarrow \mathbb{Z}_p$. It follows that $z \in  \Phi (G)$.
\end{proof}

\begin{lem}
Let $G$ be a finite $p$-group.  Then  every inner automorphism which is central of order $p$ is induced by some non-trivial homomorphism $h :G \rightarrow \Omega_1(Z(G))$.  Moreover, if $G$ is purely non-abelian then for every non-trivial homomorphism $h :G \rightarrow \Omega_1(Z(G))$, the order of the central automorphism $\sigma = 1_G+h$ induced by $h$ is equal to $p$. 
\end{lem}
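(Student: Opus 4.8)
The plan is to treat the two assertions separately, using throughout the Adney--Yen correspondence $\sigma \mapsto h_\sigma$, where $h_\sigma(x) = x^{-1}\sigma(x)$, together with its inverse $h \mapsto 1_G + h$ given by $(1_G+h)(x) = x\,h(x)$, as set up in Proposition 2.2. In both parts the problem reduces to understanding the homomorphism attached to a given central automorphism and controlling its composition powers.

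For the first assertion I would begin by describing the central inner automorphisms of order $p$. Writing $\sigma_g \colon x \mapsto g^{-1}xg$, the automorphism $\sigma_g$ lies in the centralizer of $Inn(G)$ exactly when $[x,g] \in Z(G)$ for all $x$, that is, when $g$ belongs to the second centre $Z_2(G)$; and it has order $p$ in $Inn(G) \cong G/Z(G)$ precisely when $g^p \in Z(G)$ while $g \notin Z(G)$. First I would compute the associated homomorphism $h(x) = x^{-1}\sigma_g(x) = [x,g]$: because $g \in Z_2(G)$ this value lies in $Z(G)$, and $x \mapsto [x,g]$ is then a homomorphism $G \to Z(G)$. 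The remaining point is that $h$ in fact lands in $\Omega_1(Z(G))$: since the commutators involved are central, $[x,\cdot]$ is also multiplicative in its second argument, so $h(x)^p = [x,g]^p = [x,g^p] = 1$ as $g^p \in Z(G)$. Finally $h \neq 0$ because $g \notin Z(G)$, which yields the claim.

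For the second assertion, let $h \colon G \to \Omega_1(Z(G))$ be nontrivial with $G$ purely non-abelian. The decisive step is to prove that $h \circ h = 0$. To this end I would invoke Lemma 4.3, which gives $\Omega_1(Z(G)) \leq \Phi(G)$; on the other hand $\mathrm{Im}(h)$ is an elementary abelian $p$-group, so $\ker h \supseteq \Phi(G)$. Combining these, $\mathrm{Im}(h) \leq \Omega_1(Z(G)) \leq \Phi(G) \leq \ker h$, so $h$ annihilates its own image and $h^2 = 0$. Now with $\sigma(x) = x\,h(x)$ and $h(h(x)) = 1$ one checks that $\sigma(h(x)) = h(x)$, whence by induction $\sigma^n(x) = x\,h(x)^n$; taking $n = p$ and using $h(x)^p = 1$ gives $\sigma^p = 1_G$. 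Since $h \neq 0$ forces $\sigma \neq 1_G$, the order of $\sigma$ is exactly $p$.

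The main obstacle, and the crux of the argument, is the identity $h^2 = 0$ in the second part. A priori the order of $\sigma = 1_G + h$ is governed by the circle powers $h^{(p)} = \sum_{i=1}^{p} \binom{p}{i} h^i$, and although $ph = 0$ kills the binomial terms with $1 \leq i \leq p-1$ (each $\binom{p}{i}$ being divisible by $p$ while $p\,h^i = 0$), one is still left needing $h^p = 0$, whose nilpotency index is not obviously bounded by the data. It is precisely Lemma 4.3 (purely non-abelian $\Rightarrow \Omega_1(Z(G)) \leq \Phi(G)$) that collapses this to the far stronger $h^2 = 0$ and makes the order computation uniform in $p$, the prime $p = 2$ included. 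In the first part the only mild subtlety is the commutator manipulation $[x,g]^p = [x,g^p]$, which rests on $g$ lying in $Z_2(G)$ so that all the commutators in play are central.
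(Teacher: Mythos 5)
Your proof is correct. Your treatment of the second assertion is essentially the paper's: both invoke Lemma 4.3 to get $\mathrm{Im}(h) \leq \Omega_1(Z(G)) \leq \Phi(G) \leq \ker h$, hence $h^2 = 0$, and then conclude $\sigma^p = 1_G$ (your induction $\sigma^n(x) = x\,h(x)^n$ is just an unwound form of the paper's computation $\sigma^p = 1_G + \sum_{i=1}^{p}\binom{p}{i}h^i = 1_G + ph = 1_G$, which collapses the same way once $h^2=0$). Where you genuinely diverge is the first assertion. The paper never identifies the conjugating element: writing the inner central automorphism as $\tau = 1_G + h$, it observes that $h\tau = h$ --- a homomorphism into an abelian group is unchanged by composing with an inner automorphism --- while direct expansion gives $h\tau = h + h^2$, so $h^2 = 0$; then $1_G = \tau^p = 1_G + ph$ forces $ph = 0$, i.e.\ $h$ lands in $\Omega_1(Z(G))$. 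You instead work concretely: $\tau = \sigma_g$ with $g \in Z_2(G)$, $g^p \in Z(G)$, $g \notin Z(G)$, so $h = [\,\cdot\,,g]$, and the identity $[x,g]^p = [x,g^p] = 1$ (valid because $[x,g]$ is central) puts the image in $\Omega_1(Z(G))$. Both routes are sound; the paper's is slicker, avoids commutator calculus, and shows in passing that $h^2 = 0$ for \emph{every} inner central automorphism (the order-$p$ hypothesis enters only to extract $ph = 0$), whereas your version is more explicit and exhibits the homomorphisms $[\,\cdot\,,g]$ concretely --- exactly the subspace $I$ that reappears in the proof of Theorem 4.2. Your closing diagnosis of the crux is also accurate: $ph=0$ alone leaves the term $h^p$ alive (and for $p=2$ the term $\binom{2}{2}h^2$ has unit coefficient), so Lemma 4.3's reduction to $h^2=0$ is indeed what makes the order computation work uniformly in $p$.
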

 \begin{proof}
Let be $ \tau$ an inner central automorphism of order $p$.  We can write $\tau=1_G+h$, for some  $h \in Hom(G,Z(G))$, and $1_G$ denotes the identity map of $G$. We have $ h=h\tau=h+h^2$, and so  $h^2=0$.  This implies that $1_G=\tau^p=1_G+ph$, and so $ph=0$.  Therefore  $h :G \rightarrow \Omega_1(Z(G))$.
 \\Assume that $G$ is purely non-abelian.  Since the kernel of every homomorphism  $h :G \rightarrow \Omega_1(Z(G))$ contains $\Phi(G)$,  Lemma 4.3 implies that $h^2=0$.  Therefore, if $\sigma = 1_G+h$ then $\sigma^p = 1_G+\sum_{i=1}^{p}\binom{p}{i} h^i=1_G+ph=1_G$.  The result follows. 
\end{proof}

\begin{proof}[Proof of Theorem 4.2]
Suppose that $G$ has a non-inner central automorphism $\sigma = 1_G+h_{\sigma}$ of order $p$.  Let $I$ be the image of $ \Omega_1(Z(innG))$ by the Adney-Yen map.  By Lemma 4.4 $I$ is a subspace of the $\mathbb{Z}_p$-vector space $Hom(G, \Omega_1(Z(G)))$ of dimension $d_2$.  If  $d_2 = d \cdot d_1$, then $I = Hom(G, \Omega_1(Z(G)))$.   If $Z(G)\nleq  \Phi (G)$ then we can find an element $g \in Z(G)-M$ for some maximal subgroup $M$ of $G$.  Consider a non-trivial element $z \in \Omega_1(Z(G))\cap M$  and let $h(x)=z^{r(x)}$, where $r:G \rightarrow \mathbb{Z}_p$ is the homomorphism defined by $r(mg^i)=i \mod p$, $m \in M$.  Clearly, $ h \in I$ and $1_G+h$ is not inner, since it maps $g$ to $gz$, a contradiction.  It follows that $Z(G)\leq  \Phi (G)$.  Theorem 3.8 implies that $ph_{\sigma}=0$, that is $h_{\sigma} \in I$.  It follows that $\sigma = 1_G+h_{\sigma}$ is inner, a contradiction.  Therefore $d_2 \neq d \cdot d_1$. 
\\Conversely,  by Proposition 4.1 we may suppose that $G$ is purely non-abelian.  If  $d_2 \neq d\cdot d_1$, then $I$ is a proper subspace of  $Hom(G, \Omega_1(Z(G)))$.  Hence there exists  $h :G \rightarrow \Omega_1(Z(G))$ such that the automorphism $\sigma = 1_G+h$ is not inner.   It follows from Lemma 4.4 that $\sigma $ has order $p$.
\end{proof}
 Let $G$ be a finite non-abelian $p$-group of order $p^n$ and class $c$.  Under the above notation, does the equality $d_2 = d \cdot d_1$ imply that $G$ has a cyclic center?.  
\\ Assume that $G$ is a counter example to this question, by a formula of Abdollahi [1, Theorem 2.5] we have $ d_1\cdot (d+1) \leq r+1$, where $r=n-c$ is the coclass of $G$.
The class of $G$ must be $\geq 3$, otherwise we would have $d_2=d(G/Z(G)) \leq d$, so $d_1=1$ which is not the case.  On the other hand $d \geq 2$, hence $ 3d_1 \leq r+1$, so we must have $r+1 \geq 6$, thus $n \geq 5+c \geq 8$.
\\This shows that if a counter example to the above question exists then it has at least coclass $5$ and order $ p^8$.  It is 
well-known that in a powerful $p$-group $G$, every subgroup can be generated by $d(G)$ elements, so a counter example to our question can not be a powerful $p$-group.
\subsection*{Acknowledgment}
The first author is grateful to Miloud Reguiat for his encouragement, and his comments about early drafts of this paper.

\vskip 0.4 true cm



\bigskip
\bigskip


{\footnotesize \pn{\bf Yassine Guerboussa}\; \\ {Department of
Mathematics}, {University Kasdi Merbah Ouargla,} {Ouargla, Algeria}\\
{\tt Email: yassine\_guer@hotmail.fr}\\
{\footnotesize \pn{\bf Bounabi Daoud}\; \\ {Department of
Mathematics}, {University of Setif 
,} {Setif, Algeria}\\
{\tt Email: boun\_daoud\@yahoo.com}\\

\end{document}